\documentclass{amsart}

\usepackage{amssymb, amsmath, enumerate,multicol}
\usepackage[all]{xy}

\newtheorem{thm}{Theorem}[section]
\newtheorem{lem}[thm]{Lemma}
\newtheorem{prop}[thm]{Proposition}
\newtheorem{cor}[thm]{Corollary}

\theoremstyle{definition}
\newtheorem{Def}[thm]{Definition}
\newtheorem{rmk}[thm]{Remark}

\numberwithin{equation}{thm}

\newcommand{\Ext}{{\rm Ext}_A}

\newcommand{\Hom}{{\rm Hom}_A}

\newcommand{\ds}\displaystyle
\newcommand{\ed}{{\rm{ext.deg}}}
\newcommand{\fed}{{\rm{fed}}}
\newcommand{\fpd}{{\rm{fpd}}}

\newcommand{\pd}{{\rm pd}}
\newcommand{\id}{{\rm id}}
\newcommand{\im}{{\rm im}}
\newcommand{\Amod}{A{\rm \mbox{-}mod}}
\newcommand{\modA}{A^\textsf{o} {\rm \mbox{-}mod}}

\newcommand{\CM}{{\rm CM}(A)}

\newcommand{\x}{{\underline{a}}}

\newcommand{\comment}[1]{}

\begin{document}

\title{Finitistic extension degree}
\author{Kosmas Diveris}
\address{St.\ Olaf College, Northfield, MN, USA}
\email{diveris@stolaf.edu}
\subjclass[2000]{16E65, 16E30, 16E10, 13D07}
\date{September 10, 2012}
\maketitle 

\begin{abstract} We introduce the finitistic extension degree of a ring and investigate rings for which it is finite.  The Auslander-Reiten Conjecture is proved for rings of finite finitistic extension degree and these rings are also shown to have finite finitistic dimension.  We apply these results to better understand a generalized version of the Auslander-Reiten Condition for Gorenstein rings.  We also record how the finitistic extension degree behaves with respect to many change of ring procedures that arise frequently in the commutative setting.     \end{abstract}

\begin{section}{Introduction}  

A cohomological condition for rings, now known as Auslander's Condition or (AC), arose out of unpublished work of M.\ Auslander related to the finitistic dimension conjecture (cf.\ the introduction to Chapter V of \cite{Aus1}).  While it is known that not all rings satisfy this condition \cite{aus}, several classes of rings which do have been identified and are these rings are known to satisfy longstanding homological conjectures.  

In \cite{AC} L.\ Chirstensen and H.\ Holm undertook a thorough investigation Auslander's Condition.  They showed that many of the properties that rings satisfying (AC) are known to have in common actually follow from Auslander's Condition.  Inspired by the results and remaining questions from \cite{AC}, in the present paper we introduce the finitistic extension degree of a ring $A$, which we denote by $\fed(A)$.  The finitness of this new invariant is a weaker condition than Auslander's Condition.  We are able to recover several results known for rings satisfying (AC), and in some cases we also obtain their converses, under the weaker assumption that the ring has finite finitistic extension degree.

The outline of this article is as follows. In Section 2 we define the finitistic extension degree of a ring and examine the consequences that follow from its finiteness.  In particular, we show that such rings satisfy the Finitistic Dimension Conjecture, the Auslander-Reiten Conjecture and the Gorenstein Symmetry Question. In Section 3 we specialize to rings of finite injective dimension and continue the investigation from the previous section.  In this setting we are able to obtain converses to results from Section 2 and give the precise value of $\fed(A)$ in several important cases.  In the Section 4  we show that finitistic extension degree behaves well with respect to some standard change of rings procedures in the commutative setting.

\end{section}
 
\begin{section}{Finitistic extension degree} 

Throughout this paper we consider a left Noetherian ring $A$ and we denote by $\Amod$ the category of finitely generated left $A$-modules.  We begin by recalling Auslander's Condition concerning the vanishing of cohomology referred to in the introduction. 

\begin{Def} \label{def:AC} For an $A$-module $M$,  the {\it Auslander bound for $M$} is defined to be 
\begin{align*} b_M  = \sup_i \{   \Ext^i(M,N) \neq 0  \ | \  N \in \Amod \ {\rm satisfies} \ \Ext^{i}(M,N) =0 \ {\rm  for\  all} \ i \gg 0 \}
\end{align*}   One says that $A$ satisfies {\it Auslander's Condition}, or (AC), if  $b_M$ is finite for each $A$-module $M$. If, in addition, there is some integer $b$, with the property that $b_M \leq b$ for each $A$-module $M$, one says that $A$ satisfies the {\it Uniform Auslander Condition}, or (UAC).  In this case $b$ is called a {\it uniform Auslander bound} for $A$.  \end{Def}

Examples of rings satisfying Auslander's Condition along with a discussion concerning its relationship with the Uniform Auslander Condition are provided in Appendix A of \cite{AC}.  We simply remark that there are no known examples Artin algebras or commutative local rings of finite Krull dimension which satisfy (AC) but not (UAC).  In fact, any ring of finite injective dimension satisfying (AC) must also satisfy (UAC) \cite{HJ}, \cite{Mori}.

Rather than considering all pairs of modules with eventually vanishing extensions, to define the finitistic extension degree we specialize to those modules having eventually vanishing self-extensions.

\begin{Def}\label{def:fed}  The {\it self-extension degree} of an $A$-module $M$ is defined to be \begin{align*} \ed(M) = \sup\{i \ | \ \Ext^i(M,M) \neq 0 \} \end{align*}  and the {\it finitistic extension degree} of the ring $A$ is defined to be \begin{align*} \fed(A) = \sup\{ \ed(M) \ | \ \ed(M) \ {\rm is \ finite} \} \end{align*} 
\end{Def}

\begin{rmk}\label{rmk:AC.FED}   One sees immediately from these definitions that whenever $A$ satisfies the Uniform Auslander Condition, then $\fed(A)$ is finite. However, these conditions are not equivalent.  In \cite{aus} Jorgensen and \c{S}ega provide an example of a commutative self-injective Artin algera that does not satisfy Auslander's Condition.  This ring does satisfy the hypotheses of Theorem 3.3 in \cite{self-test} and a slight modification of the proof of {\it loc.\ cit.} shows that the finitistic extension degree of this ring is zero (see Theorem 4.21 of \cite{my.thesis} for a complete justification of this claim).   \end{rmk}

We now proceed with the following observation that relates $\ed(M)$ to the projective dimension of $M$, which we denote by $\pd(M)$.
 
\begin{lem} \label{lem:inequality} Let $A$ be a Noetherian ring.  Then the following inequalities hold for each $A$-module $M$:  \begin{align*} \ed(M) \leq \ed(M \oplus A) \leq \pd(M). \end{align*}  In addition, we have:
\newcounter{count}
\begin{list}{(\arabic{count})}{\usecounter{count} \leftmargin=.5em}
\item If $M$ has finite projective dimension, then equality holds on the right. 
\item If $A$ is a local ring and $M$ has finite projective dimension, then equality holds everywhere.
\end{list}
\end{lem}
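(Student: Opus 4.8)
The plan is to prove the two inequalities and then the two refinements in turn, working from the definitions of $\ed$ and $\pd$.

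\medskip

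\textbf{The chain of inequalities.} First I would dispatch the left inequality $\ed(M) \leq \ed(M \oplus A)$. Since $\Ext^i(M \oplus A, M \oplus A)$ contains $\Ext^i(M,M)$ as a direct summand (additivity of $\Ext$ in both variables), any $i$ with $\Ext^i(M,M) \neq 0$ also has $\Ext^i(M\oplus A, M \oplus A) \neq 0$, giving the inequality directly from the supremum definition. For the right inequality $\ed(M \oplus A) \leq \pd(M)$, note $\pd(M \oplus A) = \pd(M)$ since $A$ is projective, so it suffices to observe that for any module $N$, $\Ext^i(N,-) = 0$ for all $i > \pd(N)$; applying this with $N = M \oplus A$ and $-$ also $M \oplus A$ shows $\ed(M \oplus A) \leq \pd(M \oplus A) = \pd(M)$. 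If $\pd(M) = \infty$ there is nothing to prove.

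\medskip

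\textbf{Refinement (1).} Suppose $\pd(M) = n < \infty$. By the right inequality it remains to show $\ed(M \oplus A) \geq n$, i.e.\ that $\Ext^n(M,M) \neq 0$ or $\Ext^n(M,A) \neq 0$ (in fact I expect to show $\Ext^n(M,A)\neq 0$). Take a minimal-length projective resolution $0 \to P_n \to \cdots \to P_0 \to M \to 0$. Applying $\Hom(-,A)$, the group $\Ext^n(M,A)$ is the cokernel of $\Hom(P_{n-1},A) \to \Hom(P_n,A)$. Since the resolution has length exactly $n$, the map $P_n \to P_{n-1}$ is not a split injection (otherwise $\pd M < n$), so $\Hom(P_n, A) \to$ (cokernel) is nonzero — here one uses that $P_n$ is a nonzero f.g.\ projective, hence $\Hom(P_n,A) \neq 0$, and that a map $P_n \to P_{n-1}$ of projectives whose cokernel (the $n$-th syzygy setup) forces the dual cokernel to vanish only when the map is split mono. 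This is the step requiring the most care: I would argue that $\pd_A M = \pd_{A^{\mathsf o}} \Hom(-,A)$-type duality is not available in general (no Gorenstein hypothesis yet), so instead I would use that $\Ext^n(M,A)$ detects the length of the resolution — concretely, if $\Ext^n(M,A) = 0$ one can splice to lower the projective dimension, contradicting $n = \pd M$. The cleanest route is: $\Ext^n(M,A) = 0$ would imply (together with $\Ext^{>n}(M,-)=0$ automatically) that the $n$-th syzygy $\Omega^n M$ is such that $\Hom(-,A)$ of its presentation is exact at the relevant spot, forcing $\Omega^n M$ projective to be a summand appropriately — but the standard fact is simply that for $\pd M = n$ there exists \emph{some} $N$ with $\Ext^n(M,N) \neq 0$, and then one checks $N$ can be taken a quotient of a free module, hence replaced; rather than belabor this, I would cite or reprove the elementary lemma that $\pd M = \sup\{ i : \Ext^i(M, A/\!\sim) \neq 0\}$ reduces, via $A$ Noetherian and $M$ f.g., to testing against $A$ itself when we only need the top degree. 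The honest statement is: $\Ext^n(M,A)\ne 0$ need not hold in general, so I would instead conclude $\ed(M\oplus A)\ge n$ by finding the nonvanishing among $\Ext^n(M\oplus A, M\oplus A)$ using that $\Ext^n(M, P_0)\ne 0$ — and $P_0$ is a summand of a free module, so $\Ext^n(M, P_0) \neq 0$ forces $\Ext^n(M, A^{(k)}) \neq 0$ hence $\Ext^n(M,A)\ne 0$. That $\Ext^n(M,P_0)\ne 0$ holds because $P_0 \twoheadrightarrow M$ induces a surjection (from the long exact sequence, using $\Ext^{n+1}(M,K)$ where $K=\ker$, which vanishes as $\pd M = n$ would give $\pd K = n-1$... ) — this bookkeeping is the main obstacle and I would present it as a short induction on $n$.

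\medskip

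\textbf{Refinement (2).} Now assume in addition $A$ is local and $\pd M = n < \infty$. We must upgrade the left inequality to an equality, i.e.\ show $\ed(M) \geq n$, equivalently $\Ext^n(M,M) \neq 0$. Here I invoke minimality: over a local ring, a minimal projective (= free) resolution of the f.g.\ module $M$ has all differentials with entries in the maximal ideal $\mathfrak{m}$. Applying $\Hom(-,M)$ and reducing modulo $\mathfrak m$, every induced differential on the complex $\Hom(P_\bullet, M)$ becomes zero, so $\Ext^i(M,M) \cong \Hom(P_i, M)/(\text{image}) $ surjects onto $\Hom(P_i, M) \otimes_A A/\mathfrak m \cong \Hom(P_i, M)/\mathfrak m\Hom(P_i,M)$, which is nonzero whenever $P_i \neq 0$ and $M \neq 0$ (as $\Hom(P_i,M)$ is then a nonzero f.g.\ module, so nonzero mod $\mathfrak m$ by Nakayama). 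Since $P_n \neq 0$ (the resolution has length exactly $n$), we get $\Ext^n(M,M) \neq 0$, completing the chain $\ed(M) \geq n \geq \ed(M\oplus A) \geq \ed(M)$, hence equality throughout. The local case is in fact the clean one; the main obstacle remains the non-local half of (1), where minimality cannot be exploited directly and one must track the long exact sequences by hand.
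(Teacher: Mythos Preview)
Your treatment of the two inequalities and of refinement (2) is correct and matches the paper's argument almost exactly: both use that $\Ext^i(M,M)$ is a summand of $\Ext^i(M\oplus A,M\oplus A)$ for the left inequality, and both use a minimal free resolution over the local ring to see that the image of $\partial^*$ lands in maps factoring through $\mathfrak m M$, whence $\Ext^n(M,M)\neq 0$.

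The problem is refinement (1). The paper handles it in one line by citing the classical fact (Cartan--Eilenberg, Chapter~VI, Exercise~9) that for a finitely generated module $M$ of finite projective dimension over a Noetherian ring,
\[
\pd(M)=\sup\{\,i \mid \Ext^i(M,A)\neq 0\,\},
\]
so $\Ext^n(M,A)\neq 0$ for $n=\pd(M)$, and this group sits as a summand in $\Ext^n(M\oplus A,M\oplus A)$. Your statement that ``$\Ext^n(M,A)\neq 0$ need not hold in general'' is simply false under the hypothesis $\pd(M)=n<\infty$; that it \emph{does} hold is exactly the content of (1). Your attempted workaround then has a genuine gap: you argue that the surjection $P_0\twoheadrightarrow M$ induces a surjection $\Ext^n(M,P_0)\twoheadrightarrow\Ext^n(M,M)$ (correct, since $\Ext^{n+1}(M,K)=0$), but this is useless unless you already know $\Ext^n(M,M)\neq 0$---and that is the conclusion of (2), which requires $A$ local. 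In the non-local setting $\Ext^n(M,M)$ can vanish, so nothing follows.

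If you want to prove the Cartan--Eilenberg identity rather than cite it, the clean induction is on $\pd(M)$ via the \emph{first} syzygy, not $P_0$: from $0\to K\to P\to M\to 0$ with $\pd(K)=n-1$, the long exact sequence in $\Ext^*(-,A)$ gives $\Ext^{n-1}(K,A)\cong\Ext^n(M,A)$ for $n\geq 2$, and the base case $n=1$ follows because $\Ext^1(M,A)=0$ with $K$ projective would force $\Ext^1(M,K)=0$ and hence split the sequence.
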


\begin{proof}  The inequalities are immediate consequences of the definition of extension degree.  To show (1), we assume that $\pd(M)$ is finite. Recall (Ex.\ 9 in  \cite[VI]{CE}) that one then has the equality: \begin{align}  \label{eq:pd.ext} \pd(M) = \sup\{ i \ | \ \Ext^i(M,A) \neq 0 \}. \end{align}  Since $\Ext^i(M,A)$ is a direct summand of $\Ext^i(M \oplus A, M \oplus A)$, we obtain the inequality  $\ed(M \oplus A) \geq \pd(M)$.  This shows that (1) holds.

We  now assume that $A$ is a local ring with maximal ideal $\frak{m}$.  If $M$ has finite projective dimension, we compute its self-extensions from a minimal free resolution: \[ \xymatrix{ 0  \ar[r] &  F_n \ar[r]^-{\partial} & F_{n-1} \ar[r] & \cdots \ar[r]&  F_0 \ar[r] & M \ar[r]  & 0 } \] To show (2), we need to show that $\Ext^n(M,M) \neq 0$.  This extension group is the cokernel of  the map \[ \xymatrix{\Hom(F_{n-1},M) \ar[r]^-{{\partial}^*} & \Hom(F_n,M) } \]  
 The minimality of the above resolution gives $\im( \partial^*(\varphi)) = \im (\varphi \circ \partial) \subseteq \frak{m}M$ for each $\varphi \in \Hom(F_{n-1},M)$.  However, $F_n$ is a non-zero free $A$-module and so there do exist maps in $\Hom(F_{n},M)$ whose image in not contained in $\frak{m}M$.  Therefore $\partial^*$ is not surjective and $\Ext^n(M,M) \neq 0$. \end{proof}

\Def \label{fin.dim} The {\it little finitistic dimension} of the ring $A$ is \begin{align*} {\fpd}(A) = \sup\{ \pd(M) \ | \  \pd(M) \  {\rm is \ finite} \}. \end{align*}

If $A$ is a Noetherian ring, then it has been conjectured that $\fpd(A)$ is finite.  This is known as the Finitistic Dimension Conjecture and was first recorded by H.\ Bass in \cite{Bass}.  If $A$ is commutative and local, then the Auslander-Buchsbaum Theorem  \cite[1.3.3]{BH} shows that $\fpd(A)$ is finite.  For Artin algebras, however, the Finitistic Dimension Conjecture remains open.  We refer to \cite{sheerbrooke} for details concerning the relationship between this conjecture and Auslander's Condition. One easily obtains the following inequality between $\fed(A)$ and $\fpd(A)$ the previous lemma:

\begin{cor}  \label{cor:fpd<fed} For any Noetherian ring $A$, we have $\fpd(A) \leq \fed(A)$. \end{cor}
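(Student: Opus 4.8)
The plan is to deduce this directly from part (1) of Lemma \ref{lem:inequality}. Fix an $A$-module $M$ with $\pd(M)$ finite. Part (1) of that lemma asserts that $\ed(M\oplus A)=\pd(M)$; since the right-hand side is finite by hypothesis, so is $\ed(M\oplus A)$. Hence $M\oplus A$ is one of the modules whose self-extension degree is finite, and so, by the very definition of $\fed(A)$ as the supremum of $\ed(N)$ over all such modules $N$, we obtain
\begin{align*}
\pd(M)=\ed(M\oplus A)\leq\fed(A).
\end{align*}

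Next I would take the supremum of the left-hand side over all $A$-modules $M$ of finite projective dimension. The ring $A$ itself has projective dimension zero, so this supremum is over a non-empty set and equals $\fpd(A)$ by definition; therefore $\fpd(A)\leq\fed(A)$.

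There is no genuine obstacle here — the entire content sits inside Lemma \ref{lem:inequality}. The one point worth a moment's care is why one passes through $M\oplus A$ rather than arguing with $M$ directly: the inequality $\ed(M)\leq\pd(M)$ points the wrong way, so knowing $\ed(M)\leq\fed(A)$ tells us nothing about $\pd(M)$. The device of adjoining $A$ repairs this because, by the characterization of projective dimension recalled in the proof of Lemma \ref{lem:inequality} (namely \eqref{eq:pd.ext}), the top non-vanishing group $\Ext^i(M,A)$ appears as a direct summand of $\Ext^i(M\oplus A,M\oplus A)$, which forces the equality $\ed(M\oplus A)=\pd(M)$ and not merely an inequality.
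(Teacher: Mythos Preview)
Your argument is correct and follows the paper's own proof essentially verbatim: the paper also invokes Lemma~\ref{lem:inequality}(1) to get $\pd(M)=\ed(M\oplus A)\leq\fed(A)$ for any $M$ of finite projective dimension, and then takes the supremum. Your additional paragraph explaining why one must pass through $M\oplus A$ rather than $M$ itself is a helpful elaboration but not a departure from the paper's approach.
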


\begin{proof}  If $\pd(M)$ is finite, then we have $\pd(M) = \ed(M \oplus A) \leq \fed(A)$.   \end{proof}

Dimension shifting is a key ingredient in the proof of the next lemma.  The following remark contains the facts that we will need, see e.g.\ \cite{Weibel} for more details.   We let $\Omega^nM$ denote an $n^{th}$-syzygy of $M$.   While the $n^{th}$-syzygy of a any module depend on the choice of projective resolution, the isomorphism in (\ref{eq:dim.shift}) below is easily seen to be independent of this choice.

\begin{rmk} \label{rmk:dimension.shift} Suppose that $A$ is a Noetherian ring, $M$ is an $A$-module and $d$ is a non-negative integer such that $\Ext^i(M,A) = 0$ for all $i>d$.  If $N$ is any $A$-module and $j,m,n$ are non-negative integers with $d< \min \{j, j-m+n\}$, then \begin{align} \label{eq:dim.shift} \Ext^j(M,N) \cong \Ext^{j-m+n}(\Omega^mM,\Omega^nN) \end{align} Setting $N = M$ and $n = m$, we see $\ed(M) \leq \ed(\Omega^nM)+d$ and these modules have finite extension degree simultaneously. 
\end{rmk}

\begin{lem} \label{lem:dim.shift} Suppose that $A$ is a Noetherian ring, $M$ is an $A$-module and $d$ is a non-negative integer such that $\Ext^i(M,A) = 0$ for all $i>d$.  Then we have $\ed(M \oplus \Omega^nM) \leq \ed(M) + n+d$ for each $n \in \mathbb{N}$, with equality holding if $d=0$ and $\pd(M) > 0$. \end{lem}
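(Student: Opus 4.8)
The plan is to exploit additivity of $\Ext$ on the two summands of $M\oplus\Omega^nM$ and then bound each of the four resulting pieces by dimension shifting: Remark~\ref{rmk:dimension.shift} in the second variable (this is where the hypothesis $\Ext^i(M,A)=0$ for $i>d$ enters), and the elementary syzygy isomorphism $\Ext^i(\Omega^kL,K)\cong\Ext^{i+1}(\Omega^{k-1}L,K)$, valid for $i\geq1$, in the first variable. Set $e=\ed(M)$. If $e=\infty$ there is nothing to prove (both sides of each assertion are infinite, since $\ed(M\oplus\Omega^nM)\geq\ed(M)$), so assume $e<\infty$; the case $n=0$ is trivial, so assume $n\geq1$. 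Since $\Ext^i(M\oplus\Omega^nM,M\oplus\Omega^nM)$ splits as the direct sum of $\Ext^i(M,M)$, $\Ext^i(M,\Omega^nM)$, $\Ext^i(\Omega^nM,M)$ and $\Ext^i(\Omega^nM,\Omega^nM)$, the number $\ed(M\oplus\Omega^nM)$ is the maximum of the four corresponding suprema, and it suffices to bound each.

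First, $\Ext^i(M,M)=0$ for $i>e$, giving the bound $e$. Next, the syzygy isomorphism gives $\Ext^i(\Omega^nM,M)\cong\Ext^{i+n}(M,M)$ for $i\geq1$, which vanishes once $i+n>e$; so this piece contributes at most $\max\{e-n,0\}\leq e$. For the cross-term $\Ext^i(M,\Omega^nM)$, Remark~\ref{rmk:dimension.shift} (with $N=M$) gives $\Ext^i(M,\Omega^nM)\cong\Ext^{i-n}(M,M)$ whenever $i-n>d$; since the target also vanishes once $i-n>e$, this term vanishes as soon as $i-n>\max\{d,e\}$, and so contributes at most $n+\max\{d,e\}$. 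Finally, the syzygy isomorphism gives $\Ext^i(\Omega^nM,\Omega^nM)\cong\Ext^{i+n}(M,\Omega^nM)$ for $i\geq1$, which by the previous computation is $\cong\Ext^i(M,M)$ for $i>d$ and hence vanishes for $i>e$; so this term contributes at most $\max\{d,e\}$. Taking the maximum of the four bounds yields $\ed(M\oplus\Omega^nM)\leq n+\max\{d,e\}\leq e+n+d$, which is the stated inequality.

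For the equality statement, suppose $d=0$ and $\pd(M)>0$. I would first note that these hypotheses force $\pd(M)=\infty$: because $\Ext^i(M,A)=0$ for all $i\geq1$, if $\pd(M)$ were finite then \eqref{eq:pd.ext} would give $\pd(M)=0$, contradicting the assumption; in particular $M$ is not projective. It remains to produce a nonzero element in degree $e+n$. Iterating Remark~\ref{rmk:dimension.shift} (which is unrestricted here since $d=0$) gives $\Ext^{e+n}(M,\Omega^nM)\cong\Ext^{e+1}(M,\Omega M)$. If $e\geq1$, one further application gives $\Ext^{e+1}(M,\Omega M)\cong\Ext^e(M,M)\neq0$. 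If $e=0$, then $\Ext^{e+1}(M,\Omega M)=\Ext^1(M,\Omega M)$, and this group is nonzero because the syzygy sequence $0\to\Omega M\to P_0\to M\to0$ does not split (otherwise $M$ would be a direct summand of $P_0$, hence projective). Either way $\Ext^{e+n}(M,\Omega^nM)\neq0$, so $\ed(M\oplus\Omega^nM)\geq e+n$; combined with the inequality above and $d=0$ this gives $\ed(M\oplus\Omega^nM)=\ed(M)+n$.

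The four-piece bookkeeping and the dimension shifts are routine; the delicate points are both in the equality statement. The first is recognizing that $d=0$ together with $\pd(M)>0$ forces $\pd(M)=\infty$, which is exactly what makes $M$ non-projective and licenses the unrestricted dimension shifting. The second is the boundary case $\ed(M)=0$: there the dimension shift of $\Ext^{n}(M,\Omega^nM)$ reaches only $\Ext^1(M,\Omega M)$ rather than $\Ext^0(M,M)$, so one must deduce the nonvanishing of $\Ext^1(M,\Omega M)$ directly from non-splitness of the syzygy sequence instead of from $\ed(M)$.
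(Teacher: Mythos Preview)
Your proof is correct and follows the same route as the paper: decompose $\Ext^i(M\oplus\Omega^nM,M\oplus\Omega^nM)$ into its four summands, bound each by dimension shifting, and for the equality case exhibit a nonzero class in $\Ext^{e+n}(M,\Omega^nM)$, handling $e=0$ separately via the non-split syzygy sequence. Your aside that $d=0$ and $\pd(M)>0$ force $\pd(M)=\infty$ is correct but more than is needed---non-projectivity of $M$ is immediate from $\pd(M)>0$, and it is $d=0$ (not $\pd(M)=\infty$) that licenses the unrestricted shifting via Remark~\ref{rmk:dimension.shift}.
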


\begin{proof}  
We may assume that $\ed(M)$ is finite, otherwise it is evident that equality holds.  We set $m = \ed(M)$ and appeal to Remark \ref{rmk:dimension.shift} to obtain the following vanishing:  \begin{align*}
   \left. \begin{array}{r} \Ext^j(M,M) \\
     \Ext^j(\Omega^nM, \Omega^nM)  \end{array}  \right\} & =  0 \   {\rm for} \ j>\max\{d, m\} \\
   \Ext^j(\Omega^nM,M) & =  0\ {\rm for}  \ j> \max\{m -n,d\} \\
  \Ext^j(M, \Omega^nM) & = 0\  {\rm for}  \ j>\max\{m+n,d\}
\end{align*}  Since the direct sum of these four extension groups is $\Ext^j(M \oplus \Omega^nM, M \oplus \Omega^nM)$, we see $\ed(M \oplus \Omega^nM) \leq m + n+d$, as claimed.

Suppose now that $M$ is not projective and $\Ext^i(M,A) = 0$ for $i>0$.  Given the above inequality, to show $\ed(M \oplus \Omega^nM) = m + n$ it suffices to show that $\Ext^{m+n}(M,\Omega^nM) \neq 0$. When $m > 0$ this follows immediately from (\ref{eq:dim.shift}). When $m = 0$, we have that $\Ext^1(M, \Omega M)$ is nontrivial because $M$ is not a projective module. For larger values of $n$, one may now appeal to (\ref{eq:dim.shift}) in order to obtain $\Ext^n(M, \Omega^n M) \neq 0$. \end{proof} 

In Lemma \ref{lem:inequality} we showed that when $M$ is an $A$-module of finite projective dimension the equality $\pd(M) = \ed(M \oplus A)$ holds.  In the next result we show this equality extends to all $A$-modules when $\fed(A)$ is finite. 

\begin{thm} \label{thm:GARC} If $A$ is a Noetherian ring and $\fed(A)$ is finite, then the equality $\pd(M) = \ed(M \oplus A)$ holds for every $A$-module $M$. \end{thm}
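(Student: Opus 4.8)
The plan is to argue by contraposition, using the inequality part of Lemma~\ref{lem:inequality} to reduce to a single case. Since $\ed(M\oplus A)\le\pd(M)$ always holds and equality holds whenever $\pd(M)$ is finite (Lemma~\ref{lem:inequality}(1)), the only thing left to prove is that $\pd(M)=\infty$ forces $\ed(M\oplus A)=\infty$. So I would suppose, toward a contradiction, that $\pd(M)=\infty$ while $\ed(M\oplus A)$ is finite. Fix a nonnegative integer $d$ with $\Ext^i(M,A)=0$ for all $i>d$; this is possible because $\Ext^i(M,A)$ is a direct summand of $\Ext^i(M\oplus A,\,M\oplus A)$, which vanishes for $i\gg0$. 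In fact one may take $d=\ed(M\oplus A)$.

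Next I would replace $M$ by a high syzygy. Put $M':=\Omega^dM$. Standard dimension shifting gives $\Ext^i(M',A)\cong\Ext^{i+d}(M,A)$ for every $i\ge1$, and since $i+d>d$ this is zero; hence $\Ext^i(M',A)=0$ for all $i>0$. Also $\pd(M')=\infty$, for otherwise $\pd(M)\le\pd(M')+d<\infty$; in particular $\pd(M')>0$. Finally, $\ed(M')$ is finite: indeed $\ed(M)\le\ed(M\oplus A)<\infty$, and by Remark~\ref{rmk:dimension.shift} a module and its syzygies have finite self-extension degree simultaneously.

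Now I would apply Lemma~\ref{lem:dim.shift} to $M'$, noting that the bound ``$d$'' of that lemma can be taken to be $0$ for $M'$ by the previous paragraph. Since $\pd(M')>0$, the equality clause of the lemma gives, for every $n\in\mathbb{N}$,
\[ \ed\bigl(M'\oplus\Omega^nM'\bigr)\;=\;\ed(M')+n . \]
The inequality clause of the same lemma shows the left-hand side is at most $\ed(M')+n$, hence finite, so by the definition of $\fed(A)$ we conclude $\ed(M')+n\le\fed(A)$ for every $n$. As $\fed(A)$ is a fixed finite number and $\ed(M')\ge0$, this is absurd. Therefore $\pd(M)$ must be finite, and then Lemma~\ref{lem:inequality}(1) gives $\pd(M)=\ed(M\oplus A)$, completing the argument.

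The one step that genuinely needs attention is the passage to the syzygy $M'=\Omega^dM$: it is made precisely so that the hypothesis ``$d=0$'' in the equality clause of Lemma~\ref{lem:dim.shift} applies, and one has to confirm that the three properties used there --- $\Ext^i(M',A)=0$ for $i>0$, $\pd(M')>0$, and $\ed(M')<\infty$ --- are all inherited from the assumptions on $M$. Everything else is routine manipulation with the dimension-shifting isomorphisms already isolated in Remark~\ref{rmk:dimension.shift}.
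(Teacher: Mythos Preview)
Your argument is correct and follows essentially the same route as the paper: reduce via Lemma~\ref{lem:inequality} to the case $\pd(M)=\infty$ with $\ed(M\oplus A)<\infty$, pass to the syzygy $\Omega^dM$ so that the equality clause of Lemma~\ref{lem:dim.shift} (with its ``$d=0$'') applies, and derive an unbounded family of finite self-extension degrees contradicting $\fed(A)<\infty$. The paper's proof is organized identically, with $N=\Omega^mM$ in place of your $M'=\Omega^dM$.
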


\begin{proof} From Lemma \ref{lem:inequality}, we see that the equality $\pd(M) = \ed(M\oplus A)$ holds when either (i) $\pd(M)$ is finite, or (ii) $\ed(M \oplus A) = \infty$.  We claim that when $\fed(A)$ is finite, then each $A$-module $M$ satisfies either (i) or (ii).  

Suppose, for the sake of contradiction, that there exists an $A$-module $M$ such that $m = \ed(M \oplus A)$ is finite but $\pd(M)$ is infinite. Then we have $\ed(M) \leq m$ and also $\Ext^i(M,A) = 0$ for $i>m$.  We denote by $N$ the syzygy $\Omega^mM$.  Applying the dimension shift from Remark \ref{rmk:dimension.shift} gives that $\ed(N)$ is finite and $\Ext^i(N,A) = 0$ for $i>0$.  Since $M$ has infinite projective dimension, $N$ is not a projective module.   
 
An application of Lemma \ref{lem:dim.shift} now gives the following equality for each non-negative integer $n$, the inequality is clear: \begin{align*} n + \ed(N) = \ed(N \oplus \Omega^nN) \leq \fed(A) \end{align*}  Since $\fed(A)$ is finite, this provides the necessary contradiction.
\end{proof}

An immediate corollary of this result is that a strong version of the Auslander-Reiten Conjecture holds for rings with finite finitistic extension degree.  Before stating this let us recall the definitions.

\begin{Def} \label{def:ARC} We say that $A$ satisfies the {\it Auslander-Reiten Condition} (ARC) if whenever $M$ is an $A$-module such that $\Ext^i(M,M\oplus A) = 0$ for all $i > 0$ one has that $M$ is projective.
\end{Def}

This condition was introduced in \cite{AuR2} where the authors conjectured that all Artin algebras satisfy (ARC).  They then show that this conjecture is equivalent to a generalized version of a conjecture of Nakayama.  This condition also been considered for commutative Noetherian rings.  A natural generalization (ARC) is the following condition: 

\begin{Def} \label{def:GARC} We say that $A$ satisfies the {\it Generalized Auslander-Reiten Condition} (GARC) if whenever $M$ is an $A$-module such that $\Ext^i(M,M\oplus A) = 0$ for all $i >n $ one has that $\pd(M) \leq n$. \end{Def}

It is clear that any ring satisfying (GARC) also satisfies (ARC), indeed it is just the special case when $n=0$. These are not, however, equivalent conditions for Artin algebras (see Remark \ref{rmk:arc.garc} below).

The analogous version of the next result for rings satisfying Auslander's Condition is given in Theorem 2.3 of \cite{AC}.  We note that while they consider a version of (GARC) for complexes rather than just modules, it is equivalent to the version we have stated here by Theorem 3.4 of \cite{D(GARC)}.

\begin{cor} \label{cor:GARC} If $A$ is a Noetherian ring and $\fed(A)$ is finite, then the Generalized Auslander-Reiten Condition holds for $A$. \end{cor}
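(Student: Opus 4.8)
The plan is to deduce Corollary~\ref{cor:GARC} directly from Theorem~\ref{thm:GARC}. Suppose $A$ is a Noetherian ring with $\fed(A)$ finite, and let $M$ be an $A$-module such that $\Ext^i(M, M\oplus A)=0$ for all $i>n$. I want to conclude $\pd(M)\leq n$. The first step is to observe that the hypothesis $\Ext^i(M,M\oplus A)=0$ for $i>n$ is exactly the statement $\Ext^i(M\oplus A, M\oplus A)=0$ for $i>n$: indeed $\Ext^i(M\oplus A, M\oplus A)$ decomposes as a direct sum of $\Ext^i(M,M)$, $\Ext^i(M,A)$, $\Ext^i(A,M)$ and $\Ext^i(A,A)$, and the last two vanish for $i>0$ since $A$ is projective, while the first two are precisely the summands controlled by the hypothesis $\Ext^i(M,M\oplus A)=0$. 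Hence $\ed(M\oplus A)\leq n$; in particular $\ed(M\oplus A)$ is finite.

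The second step is simply to invoke Theorem~\ref{thm:GARC}: since $\fed(A)$ is finite, that theorem gives $\pd(M)=\ed(M\oplus A)$ for every $A$-module $M$. Combining this with the bound $\ed(M\oplus A)\leq n$ from the first step yields $\pd(M)\leq n$, which is exactly the conclusion defining (GARC). So the whole argument is essentially two lines once Theorem~\ref{thm:GARC} is in hand.

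There is no real obstacle here; the only thing to be careful about is the bookkeeping in the first step, namely making sure the hypothesis on $\Ext^i(M,M\oplus A)$ translates correctly into finiteness of $\ed(M\oplus A)$ and in fact into the sharper bound $\ed(M\oplus A)\leq n$ rather than merely finiteness. One should note that the decomposition of $\Ext^i$ over a direct sum in both variables is standard and that the vanishing of $\Ext^i(A,-)$ for $i>0$ is just projectivity of $A$. Once $\ed(M\oplus A)\leq n<\infty$ is established, Theorem~\ref{thm:GARC} does all the work and the bound transfers verbatim to $\pd(M)$.
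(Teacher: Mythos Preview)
Your proof is correct and follows essentially the same approach as the paper: both arguments reduce (GARC) to the inequality $\pd(M)\leq \ed(M\oplus A)$ and then invoke Theorem~\ref{thm:GARC}. The paper simply asserts this rephrasing in one line, while you spell out the direct-sum decomposition of $\Ext^i(M\oplus A, M\oplus A)$ explicitly.
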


\begin{proof} Observe that in terms of extension degrees, one may rephrase the condition (GARC) as $\pd(M) \leq \ed(M \oplus A)$ for every $A$-module $M$.  Then the corollary follows immediately from Theorem \ref{thm:GARC}.
\end{proof} 
 
In the sequel we denote by $A^{\textsf{o}}$ the opposite ring of $A$ and identify the category of left $A^{\textsf{o}}$-modules with the that of the right $A$-modules.  We denote the injective dimension of the $A$-module $M$ by $\id_A(M)$.

\begin{Def} The ring $A$ is Gorenstein if both $\id_A(A)$ and $\id_{A^{\textsf{o}}}(A)$ are finite.  \end{Def}

It remains an open question if $A$ Gorenstein when only $\id_A(A)$ is known to be finite, that is whether $\id_A(A) <\infty$ implies  $\id_{A^{\textsf{o}}}(A) < \infty$.  This has been called the Gorenstein Symmetry Question \cite{AC}.

It follows from the next proposition that the Gorenstein Symmetry Question is answered in the affirmative for Artin algebras of finite finitistic extension degree.  Recall, from Chapter II of \cite{ARS}, that when $A$ is an Artin algebra and $J$ is the direct sum of the indecomposable injective $A$-modules then the contravariant functor $D(-) = \Hom(-,J)$ gives a duality  $ \Amod  \rightarrow  \modA$.

\begin{prop} \label{prop:symmetry} If an Artin algebra $A$ satisfies the Generalized Auslander-Reiten Condition, then one has the following: \begin{align*} \id_{A^{\textsf{o}}}(A) =  \ed(D({_{A^{\textsf{o}}}A}) \oplus A) \leq \id_A(A)  \end{align*}  If, in addition, $\id_A(A)$ is finite then equality also holds on the right. \end{prop}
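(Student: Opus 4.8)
The proposition is really a duality statement, so the strategy is to translate everything about $A$ as a right module into a statement about $D(A)$ as a left module, apply the (GARC) hypothesis to $D(A)$, and then translate back. The duality $D = \Hom(-,J)$ is exact and sends projectives to injectives and injectives to projectives, so it converts injective resolutions into projective resolutions. The key dictionary entries I would establish first are: (a) $\id_{A^{\textsf{o}}}(A) = \pd_A(D(\,{}_{A^{\textsf{o}}}A))$, since $D$ carries a minimal injective resolution of ${}_{A^{\textsf{o}}}A$ over $A^{\textsf{o}}$ to a projective resolution of $D({}_{A^{\textsf{o}}}A)$ over $A$; and (b) $\Ext_A^i(D(N), D(M)) \cong \Ext_{A^{\textsf{o}}}^i(M,N)$ for all finitely generated right $A$-modules $M,N$, which is the standard fact that a duality reverses Ext. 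In particular, setting $X = D({}_{A^{\textsf{o}}}A)$, we get $\Ext_A^i(X,X) \cong \Ext_{A^{\textsf{o}}}^i(A,A)$ and $\Ext_A^i(X,A) = \Ext_A^i(X, D(D(A))) \cong \Ext_{A^{\textsf{o}}}^i(D(A), A)$.

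**Carrying it out.** First I would record the left-hand equality $\id_{A^{\textsf{o}}}(A) = \pd_A(X)$ from (a), and note that by Lemma~\ref{lem:inequality}(1) (or just by definition when this is infinite, together with Theorem~\ref{thm:GARC} — but here we only assume (GARC), so I use Corollary~\ref{cor:GARC}'s rephrasing) we have $\pd_A(X) = \ed(X \oplus A)$ precisely when $A$ satisfies (GARC): indeed (GARC) gives $\pd_A(X) \le \ed(X\oplus A)$, while Lemma~\ref{lem:inequality} always gives $\ed(X\oplus A)\le \pd_A(X)$. This establishes the middle equality $\id_{A^{\textsf{o}}}(A) = \ed(X\oplus A)$. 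Next, to get the inequality $\ed(X\oplus A) \le \id_A(A)$, I would compute $\ed(X \oplus A)$ as a supremum of the four groups $\Ext_A^i(X,X)$, $\Ext_A^i(X,A)$, $\Ext_A^i(A,X)$, $\Ext_A^i(A,A)$; the last two vanish for $i>0$. Using the dictionary, $\Ext_A^i(X,X)\cong\Ext_{A^{\textsf{o}}}^i(A,A)$ vanishes for $i > \id_{A^{\textsf{o}}}(A)$, but I want a bound by $\id_A(A)$; here is where I would invoke that $\Ext_A^i(X,X)$ is computed from an injective resolution of $X$ in the second variable, i.e.\ it vanishes for $i > \id_A(X)$, and then argue $\id_A(X) = \id_A(D({}_{A^{\textsf{o}}}A)) = \pd_{A^{\textsf{o}}}(A) $ — wait, that is $\id_{A^{\textsf{o}}}(A)$ again. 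The cleaner route for the $\le \id_A(A)$ bound: $\Ext_A^i(X,A)$ vanishes for $i > \id_A(A)$ since it is computed from an injective resolution of $A$ over $A$; and $\Ext_A^i(X,X)$: write the minimal injective resolution of $X$ over $A$ — but $\id_A(X)$ need not be $\id_A(A)$. So instead I would observe $\pd_A X = \id_{A^{\textsf{o}}}A$ and separately bound each Ext group; the bound $\ed(X\oplus A)\le\id_A(A)$ should come from $\Ext_A^i(X, A) = 0$ for $i>\id_A A$ together with the fact that $\ed(X\oplus A) = \sup\{i : \Ext_A^i(X,A)\neq 0\}$ is already forced once we know $\pd_A X = \ed(X\oplus A)$ and $\pd_A X = \sup\{i:\Ext^i_A(X,A)\neq 0\}$ by equation~(\ref{eq:pd.ext}) — but (\ref{eq:pd.ext}) requires $\pd_A X<\infty$, i.e.\ $\id_{A^{\textsf{o}}}A<\infty$, which is not assumed. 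The resolution of this tension is the whole point of the proposition: if $\id_{A^{\textsf{o}}}A = \infty$ then $\ed(X\oplus A)=\infty$ by (GARC)/Theorem~\ref{thm:GARC}, and the claimed inequality $\infty \le \id_A A$ would be false unless $\id_A A=\infty$ too — so in fact the inequality content is that $\id_A A<\infty \Rightarrow \id_{A^{\textsf{o}}}A<\infty$, proved by: $\id_A A<\infty$ forces $\Ext_A^i(X,A)=0$ for $i\gg0$, hence $\ed(X\oplus A)<\infty$ (the other three summand-families already vanish for large $i$ once $\Ext^i_A(X,X)\cong\Ext^i_{A^{\textsf o}}(A,A)$ — hmm, need $\Ext^i_{A^{\textsf o}}(A,A)=0$ for $i\gg 0$, which follows from $\id_A A<\infty$ via $\Ext^i_A(X,X)$ computed from an injective resolution of the second $X$, of length $\id_A X\le\id_A A + \pd_A X$? no). The correct argument: $\Ext_A^i(X,X)$ is a homology of $\Hom_A(P_\bullet, X)$ where $P_\bullet\to X$ is the projective resolution of length $\pd_A X$; so $\Ext^i_A(X,X)=0$ for $i>\pd_A X$, and $\pd_A X = \id_{A^{\textsf o}}A$; this is circular. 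So finally: $\Ext^i_A(X,X) \cong \Ext^i_A(X, D(D A))$ and $\id_A(D(DA))=\id_A X$; but $X=D({}_{A^{\textsf o}}A)$ and $\id_A D(-)=\pd_{A^{\textsf o}}(-)$, giving $\id_A X = \pd_{A^{\textsf o}}({}_{A^{\textsf o}}A)=0$! That is the miracle — $X$ has injective dimension zero as a left $A$-module because $A$ is projective (in fact free) as a right $A$-module, so $X = D(\text{free})$ is injective over $A$. Hence $\Ext^i_A(X,X)=0$ and $\Ext^i_A(A,X)=0$ for all $i>0$ unconditionally, $\Ext^i_A(A,A)=0$ for $i>0$, and the only surviving family is $\Ext^i_A(X,A)$, which vanishes for $i>\id_A A$. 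Therefore $\ed(X\oplus A) = \sup\{i:\Ext^i_A(X,A)\neq 0\}\le\id_A A$, giving the inequality; and by (GARC) $\pd_A X = \id_{A^{\textsf o}}A$ equals this, establishing the middle equality, and equality on the right when $\id_A A<\infty$ requires showing $\Ext^{\id_A A}_A(X,A)\neq 0$.

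**Main obstacle.** The delicate point — which took the discussion above several false starts — is the inequality $\ed(D({}_{A^{\textsf o}}A)\oplus A)\le\id_A A$ \emph{without} assuming $\id_{A^{\textsf o}}A$ is finite. The clean resolution is the observation that $X = D({}_{A^{\textsf o}}A)$ is an \emph{injective} left $A$-module, because ${}_{A^{\textsf o}}A$ is projective over $A^{\textsf o}$ and the duality swaps projectives and injectives; this kills $\Ext^i_A(X,-)$-contributions and $\Ext^i_A(-,X)$-contributions to $\ed(X\oplus A)$ for $i>0$, leaving only $\Ext^i_A(X,A)$, which is manifestly bounded by $\id_A A$. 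For the final equality when $\id_A A<\infty$: then $\pd_{A^{\textsf o}}(DA) = \id_A A$ by the same duality dictionary applied in the other direction, so $\Ext^{\id_A A}_{A^{\textsf o}}(DA, ?)\neq 0$ for a suitable module, and pushing this through $\Ext^i_A(X,A)\cong\Ext^i_{A^{\textsf o}}(DA,A)$ (since $A=D(X)$ as right modules — wait, $D(X)=D(D({}_{A^{\textsf o}}A))\cong{}_{A^{\textsf o}}A$, good) gives that $\id_A A = \pd_{A^{\textsf o}}(DA) = \sup\{i:\Ext^i_{A^{\textsf o}}(DA,A)\neq 0\}$ by (\ref{eq:pd.ext}), which now legitimately applies since $\pd_{A^{\textsf o}}(DA)=\id_A A<\infty$; hence $\Ext^{\id_A A}_A(X,A)\neq 0$ and $\ed(X\oplus A) = \id_A A$, closing the proof.
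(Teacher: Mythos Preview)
Your argument is correct and, once the false starts are stripped away, follows the same route as the paper: use (GARC) together with Lemma~\ref{lem:inequality} to get $\pd_A(X)=\ed(X\oplus A)$ for $X=D({}_{A^{\textsf o}}A)$; invoke the duality identity $\id_{A^{\textsf o}}(A)=\pd_A(X)$; and then observe that $X$ is an injective left $A$-module (since ${}_{A^{\textsf o}}A$ is projective over $A^{\textsf o}$), so that $\ed(X\oplus A)=\sup\{i:\Ext^i_A(X,A)\neq 0\}\le \id_A(A)$.

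The one genuine difference is in the final clause. When $\id_A(A)<\infty$, the paper simply notes that $A$ is then Gorenstein and cites Zaks' theorem that $\id_A(A)=\id_{A^{\textsf o}}(A)$. You instead give a self-contained argument: apply the duality to identify $\Ext^i_A(X,A)\cong\Ext^i_{A^{\textsf o}}(D({}_AA),A)$, observe $\pd_{A^{\textsf o}}(D({}_AA))=\id_A(A)<\infty$, and invoke~(\ref{eq:pd.ext}) over $A^{\textsf o}$ to conclude $\Ext^{\id_A(A)}_A(X,A)\neq 0$. This is a clean way to avoid the external reference, and in effect re-derives the relevant special case of Zaks' result from the tools already at hand.
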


\begin{proof}  As observed in the proof of Corollary \ref{cor:GARC},  we have that when $A$ satisfies (GARC), there is an equality \begin{align*} \pd_A(D({_{A^{\textsf{o}}}A})) = \ed(D({_{A^{\textsf{o}}}A}) \oplus A) \end{align*} The equality  $\id_{A^{\textsf{o}}}(A) = \pd_A(D({_{A^{\textsf{o}}}A}))$ is provided by Lemma 6.9 in \cite{AuR}.  Together, these give the desired equality $\id_{A^{\textsf{o}}}(A) =  \ed(D({_{A^{\textsf{o}}}A}) \oplus A)$.

In order to demonstrate the stated inequality, first note that the $A$-module $D({_{A^{\textsf{o}}}A})$ is injective.  This gives the first equality below, the others are clear:  \begin{align*} \ed(D({_{A^{\textsf{o}}}A}) \oplus A) = \sup_i\{\Ext^i(D({_{A^{\textsf{o}}}A},A) \neq 0 \} \leq \pd(D({_{A^{\textsf{o}}}A})) = \id_{A^{\textsf{o}}}(A) \end{align*} 

For the last claim, we now have that if $\id_A(A)$ is finite and $A$ satisfies (GARC), then $A$ is Gorenstein.  In \cite{Zaks} it is shown that this implies $ \id_A(A) = \id_{A^{\textsf{o}}}(A)$. \end{proof}

\begin{rmk}  \label{rmk:op-ing}It is not known if satisfying the condition (GARC) is also a left-right symmetric property.  However, if $A$ is an Artin algebra then $\fed(A) = \fed(A^{\textsf{o}})$. Indeed, if $M$ is any $A$-module, we have \[ \Ext^i(_AM,_AM) \cong {\rm Ext}_{A^{\textsf{o}}}^i(D(_AM),D(_AM)) \]  It follows from this that $\ed_A(_AM) = \ed_{A^{\textsf{o}}}(D(_AM))$ for each $A$-module $M$.  This shows that $\fed(A) \leq \fed(A^{\textsf{o}})$ and an analogous argument will demonstrate the opposite inequality. 
\end{rmk}
\end{section}

\begin{section}{Applications for Gorenstein Rings}
In this section we restrict our attention to rings of finite injective dimension, and we will give several conditions that are equivalent to $\fed(A) < \infty$ for such a ring $A$.  In the next theorem we show that the following subcategory of $\Amod$ detects the finiteness of the finitistic extension degree:  \begin{align*} \CM = \{ M \ \in \Amod \ | \ \Ext^i(M,A) = 0 \ {\rm for} \ i > 0 \} \end{align*}  When $A$ is (commutative) Gorenstein, $\CM$ is the subcategory of (maximal) Cohen-Macaulay $A$-modules.  Note that $\CM$ is closed under taking syzygies and direct sums of modules.

\begin{thm}  \label{thm:equivalent} If $A$ is a Noetherian ring and $\id(A)$ is finite, then the following conditions are equivalent: 
\newcounter{count1}
\begin{list}{(\arabic{count1})}{\usecounter{count1} \leftmargin=.5em}
\item $\fed(A)$ is finite.
\item $\ed(M)$ is finite if and only if $\pd(M)$ is finite. 
\item If $M \in \CM$ then $\ed(M)< \infty$ if and only if $M$ is projective.
\item $\sup \{ \ed(M) \ | \ M \in \CM \ {\rm and} \ \ed(M) < \infty \} =0$.
\item $\fed(A) \leq \id(A)$.
\end{list}
\end{thm}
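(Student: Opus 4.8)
The plan is to prove the cycle of implications $(1)\Rightarrow(2)\Rightarrow(3)\Rightarrow(4)\Rightarrow(5)\Rightarrow(1)$, using the finiteness of $\id(A)$ at the two places where it matters: to control $\Ext^i(M,A)$ uniformly via $d=\id(A)$, and to identify $\fed(A)$ with the self-extension degrees of modules in $\CM$.

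First I would dispatch $(1)\Rightarrow(2)$. The nontrivial direction is that $\ed(M)$ finite forces $\pd(M)$ finite; but since $\id(A)$ is finite we automatically have $\Ext^i(M,A)=0$ for $i>\id(A)$, so $\ed(M\oplus A)$ is finite whenever $\ed(M)$ is, and Theorem~\ref{thm:GARC} then gives $\pd(M)=\ed(M\oplus A)<\infty$. The converse is part~(1) of Lemma~\ref{lem:inequality}. Next, $(2)\Rightarrow(3)$: for $M\in\CM$ we have $\Ext^i(M,A)=0$ for $i>0$, so by~\eqref{eq:pd.ext} the projective dimension of $M$ equals $\sup\{i\mid\Ext^i(M,A)\neq 0\}$, which is $0$ exactly when $M$ is projective; combining with~(2), $\ed(M)<\infty$ iff $\pd(M)<\infty$ iff $\pd(M)=0$ iff $M$ is projective. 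Then $(3)\Rightarrow(4)$ is immediate: condition~(3) says the only modules in $\CM$ with finite $\ed$ are the projectives, and projective modules have $\ed=0$ (indeed $\ed(P)\le\pd(P)=0$ by Lemma~\ref{lem:inequality}), so the supremum in~(4) is $0$ (with the usual convention that the supremum over the possibly-larger set of projectives is $0$, not $-\infty$, since $A$ itself is in $\CM$).

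The implication $(4)\Rightarrow(5)$ is the one I expect to carry the real content, and it is where $\id(A)<\infty$ is used decisively. Set $d=\id(A)$. Given any $A$-module $M$ with $\ed(M)$ finite, I want $\ed(M)\le d$. Since $\id(A)=d$ we have $\Ext^i(M,A)=0$ for $i>d$, so the $d$-th syzygy $N=\Omega^d M$ lies in $\CM$ (this uses that $\Ext^i(N,A)\cong\Ext^{i+d}(M,A)$ for $i>0$, the dimension-shift identity, since $d\ge\id(A)$ kills the relevant obstruction — concretely $\Ext^{i+d}(M,A)=0$ for $i>0$). By the last sentence of Remark~\ref{rmk:dimension.shift}, $\ed(N)$ is finite; by~(4), $\ed(N)=0$. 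Now I run Lemma~\ref{lem:dim.shift} backwards: applied with the hypothesis $\Ext^i(M,A)=0$ for $i>d$, it gives $\ed(M)\le\ed(M\oplus\Omega^d M)\le\ed(M)+d$ — not quite what I want directly. Instead I would use Remark~\ref{rmk:dimension.shift} with $N=M$, $n=d$: it yields $\ed(M)\le\ed(\Omega^d M)+d=\ed(N)+d=0+d=d$. Taking the supremum over all such $M$ gives $\fed(A)\le d=\id(A)$, which is~(5). The only subtlety here is checking that $\Omega^d M\in\CM$ for \emph{every} $M$ (not just those of finite projective dimension), which follows because $\id_A A=d$ forces $\Ext^{>d}(-,A)\equiv 0$ on all of $\Amod$; I should state this cleanly, perhaps as a one-line observation before the main argument. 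Finally $(5)\Rightarrow(1)$ is trivial since $\id(A)$ is assumed finite, so $\fed(A)\le\id(A)<\infty$.

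The main obstacle, then, is organizing the $(4)\Rightarrow(5)$ step so that the syzygy $\Omega^d M$ is genuinely seen to lie in $\CM$ and the dimension-shift bound $\ed(M)\le\ed(\Omega^d M)+d$ is invoked with the correct hypotheses; everything else is bookkeeping with the definitions and Lemma~\ref{lem:inequality}. A secondary point worth a sentence is the convention in~(4): the supremum is over a nonempty set (it contains $A$, with $\ed(A)=0$), so asserting it equals $0$ is meaningful and is equivalent to saying no non-projective module of $\CM$ has finite positive self-extension degree.
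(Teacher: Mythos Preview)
Your proposal is correct and follows essentially the same route as the paper: the same cycle $(1)\Rightarrow(2)\Rightarrow(3)\Rightarrow(4)\Rightarrow(5)\Rightarrow(1)$, the same invocation of Theorem~\ref{thm:GARC} for $(1)\Rightarrow(2)$, and the same syzygy-plus-dimension-shift argument (Remark~\ref{rmk:dimension.shift}) for $(4)\Rightarrow(5)$. Your version is in fact slightly more careful than the paper's in spelling out why $\Omega^d M\in\CM$ and why the set in~(4) is nonempty.
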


\begin{proof} We set $d = \id(A)$.  Then we have $\Ext^i(M,A) = 0$ for all $i > d$ and therefore $\ed(M) < \infty$ if and only if $\ed(M \oplus A)< \infty$. That (1) implies (2) now follows from Theorem \ref{thm:GARC}.  That (2) implies (3) follows from the definition of $\CM$ and (\ref{eq:pd.ext}).  Then (4) follows immediately from (3).  

To show that (4) implies (5), we assume that $M$ is an $A$-module and $\ed(M)$ is finite.  Then $\Omega^n(M) \in \CM$ for some $0 \leq n\leq d$ and Remark \ref{rmk:dimension.shift} gives that $\ed(\Omega^n(M)) < \infty$ and $\ed(M) \leq \ed(\Omega^n(M)) +d$.  The claim in (5) now follows from that in (4).
The final implication, from (5) to (1), is clear. \end{proof}
 
We point out separately the following converse to Corollary \ref{cor:GARC} for rings of finite injective dimension, which is simply a restatement of the equivalence of (1) and (2) in the Theorem.

\begin{cor} \label{cor:fed.garc}If $A$ is a Noetherian ring and $\id(A)$ is finite, then $\fed(A)$ is finite if and $A$ satisfies the Generalized Auslander-Reiten Condition. \end{cor}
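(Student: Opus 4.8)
The plan is to obtain this corollary as an immediate consequence of the equivalence of conditions (1) and (2) in Theorem~\ref{thm:equivalent}. Since (1) is precisely the statement that $\fed(A)$ is finite, it suffices to prove that, under the hypothesis $\id(A)<\infty$, condition (2) holds if and only if $A$ satisfies the Generalized Auslander--Reiten Condition. Throughout I would use the reformulation of \garc\ recorded in the proof of Corollary~\ref{cor:GARC}: the condition \garc\ is the assertion that $\pd(M)\leq\ed(M\oplus A)$ for every $A$-module $M$.

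First I would check that (2) implies \garc, which in fact needs no hypothesis on $\id(A)$. Given an $A$-module $M$, if $\ed(M\oplus A)=\infty$ the desired inequality is vacuous, while if $\ed(M\oplus A)<\infty$ then $\ed(M)\leq\ed(M\oplus A)<\infty$ by Lemma~\ref{lem:inequality}, so (2) forces $\pd(M)<\infty$, and then Lemma~\ref{lem:inequality}(1) gives $\pd(M)=\ed(M\oplus A)$. (One could equally invoke Corollary~\ref{cor:GARC} together with the implication (2)$\Rightarrow$(1) of Theorem~\ref{thm:equivalent}.)

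The other direction, \garc\ implies (2), is where the hypothesis $\id(A)=d<\infty$ enters. Its role is to guarantee that $\ed(M)$ and $\ed(M\oplus A)$ are finite simultaneously: writing $\Ext^i(M\oplus A,M\oplus A)$ as the direct sum of $\Ext^i(M,M)$, $\Ext^i(M,A)$, $\Ext^i(A,M)$ and $\Ext^i(A,A)$, the last two vanish for $i>0$ since $A$ is projective, and the second vanishes for $i>d$ since $\id(A)=d$, so the self-extensions of $M\oplus A$ vanish for large $i$ exactly when those of $M$ do. Assuming \garc, then: if $\ed(M)<\infty$ we get $\ed(M\oplus A)<\infty$, hence $\pd(M)\leq\ed(M\oplus A)<\infty$; and if $\pd(M)<\infty$ then $\ed(M)\leq\pd(M)<\infty$ by Lemma~\ref{lem:inequality}. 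That is exactly condition (2). Combining the two directions with the equivalence in Theorem~\ref{thm:equivalent} completes the argument.

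I do not anticipate a genuine obstacle here, since the substantive work is already contained in Theorem~\ref{thm:equivalent} and the corollary is essentially a reformulation. The only point calling for care is the simultaneous-finiteness observation above, which is precisely why $\id(A)<\infty$ is needed for this converse to Corollary~\ref{cor:GARC}, even though Corollary~\ref{cor:GARC} itself requires no such hypothesis.
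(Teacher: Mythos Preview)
Your proposal is correct and follows exactly the route the paper intends: the paper declares the corollary to be ``simply a restatement of the equivalence of (1) and (2)'' in Theorem~\ref{thm:equivalent}, and you have written out the details of why, under $\id(A)<\infty$, condition (2) is equivalent to \garc. The simultaneous-finiteness observation you isolate is precisely the one made in the first line of the proof of Theorem~\ref{thm:equivalent}, so your argument is a faithful expansion of the paper's one-line justification rather than a different approach.
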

   
In the case of commutative Gorenstein rings we can improve on the statement (3) from the theorem.  In \cite{varieties} it is shown that for a local complete intersection ring $A$ one has an equality $\pd(M) = \ed(M)$ for each $A$-module $M$. The next corollary shows that that this equality extends to all modules over any commutative Gorenstein ring of finite finitistic extension degree.  Before giving a proof of this, we record the following observation:

\begin{rmk}  \label{rmk:localization} If $A$ is a commutative Noetherian ring, then  \[ \pd_A(M)  =  \sup\{ \pd_{A_{\frak{m}}}(M_{\frak{m}}) \ | \ \frak{m} \ {\rm is \ a \ maximal \ ideal \ of } \ A \}.  \] Also, $\Ext^i(M,M) = 0$ if and only if ${\rm Ext}^i_{A_\frak{m}}(M_{\frak{m}}, M_{\frak{m}})=0$ for every maximal ideal $\frak{m}$.  Thus, we have an equality \[ \ed_A(M)  =  \sup\{ \ed_{A_{\frak{m}}}(M_{\frak{m}}) \ | \ \frak{m} \ {\rm is \ a \ maximal \ ideal \ of } \ A \}.  \]
From this, it also follows that \[ \fed(A) = \sup\{ \fed(A_{\frak{m}}) \ | \ \frak{m} \ {\rm is \ a \ maximal \ ideal \ of } \ A \}.  \]  \end{rmk}

\begin{cor} \label{cor:ed=pd} Assume that $A$ is a Noetherian ring and both $\id(A)$ and $\fed(A)$ finite.  If $A$ is either commutative or local, then $\ed(M) = \pd(M)$ for each $A$-module $M$.
\end{cor}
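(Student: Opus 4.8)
The plan is to handle the local case directly and then reduce the commutative case to it by localization.

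First suppose $A$ is local. If $\pd(M)<\infty$, then Lemma \ref{lem:inequality}(2) already gives $\ed(M)=\ed(M\oplus A)=\pd(M)$, so there is nothing to prove. If $\pd(M)=\infty$, I want $\ed(M)=\infty$ as well. Since $\id(A)$ and $\fed(A)$ are finite, Theorem \ref{thm:equivalent} applies, and in particular its condition (2) holds: $\ed(M)$ is finite if and only if $\pd(M)$ is finite. Hence $\pd(M)=\infty$ forces $\ed(M)=\infty$, and the equality $\ed(M)=\pd(M)$ holds in this case too. This disposes of the local case.

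Now suppose $A$ is commutative and Noetherian. For each maximal ideal $\frak{m}$, the localization $A_{\frak{m}}$ is a commutative Noetherian local ring, and I claim it inherits both finiteness hypotheses. For the finitistic extension degree this is exactly the last displayed equality of Remark \ref{rmk:localization}, which gives $\fed(A_{\frak{m}})\leq\fed(A)<\infty$. For the injective dimension one invokes the standard fact that localization does not increase injective dimension, so $\id_{A_{\frak{m}}}(A_{\frak{m}})\leq\id_A(A)<\infty$. Thus $A_{\frak{m}}$ satisfies the hypotheses of the corollary, and by the local case just proved, $\ed_{A_{\frak{m}}}(M_{\frak{m}})=\pd_{A_{\frak{m}}}(M_{\frak{m}})$ for every $A$-module $M$ and every maximal ideal $\frak{m}$. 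Taking the supremum over all $\frak{m}$ and applying the two supremum formulas in Remark \ref{rmk:localization} yields
\[
\ed_A(M)=\sup_{\frak{m}}\ed_{A_{\frak{m}}}(M_{\frak{m}})=\sup_{\frak{m}}\pd_{A_{\frak{m}}}(M_{\frak{m}})=\pd_A(M),
\]
as desired.

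There is no genuine obstacle here: all the substance has been built into Lemma \ref{lem:inequality}, Theorem \ref{thm:equivalent}, and Remark \ref{rmk:localization}. The only points requiring a word of care are the two localization assertions — that $\id_{A_{\frak{m}}}(A_{\frak{m}})$ and $\fed(A_{\frak{m}})$ stay finite — the first being a routine fact from commutative algebra (e.g.\ \cite{BH}, \cite{Weibel}) and the second being recorded in Remark \ref{rmk:localization}. One should also keep in mind that the supremum identities are being used as statements about possibly infinite quantities, which is how they are stated.
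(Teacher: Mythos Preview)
Your proof is correct and follows essentially the same route as the paper: the local case is handled by combining Lemma~\ref{lem:inequality}(2) for finite projective dimension with Theorem~\ref{thm:equivalent}(2) for infinite projective dimension, and the commutative case is reduced to the local one via Remark~\ref{rmk:localization}. The paper's proof is terser---it applies Theorem~\ref{thm:equivalent} once up front (for any $A$ with $\id(A),\fed(A)<\infty$) to dispose of the infinite-$\pd$ case before splitting into local versus commutative---whereas you fold that step into the local case and then verify explicitly that $\id_{A_{\frak m}}(A_{\frak m})$ and $\fed(A_{\frak m})$ remain finite; but this is a matter of organization, not substance.
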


\begin{proof} When both $\id(A)$ and $\fed(A)$ are finite, Theorem 3.1 gives that $\ed(M)= \infty$ when $\pd(M)= \infty$.   If $A$ is local and $\pd(M)$ is finite, then Lemma \ref{lem:inequality} gives that $\pd(M) = \ed(M)$ when $\pd(M)$ is finite.  For commutative rings, use Remark \ref{rmk:localization} to reduce to the local case.
\end{proof}
 
In the next two corollaries, we show that the bound for $\fed(A)$ given in Theorem \ref{thm:equivalent}(5) is strict for commutative rings and Artin algebras having finite injective dimension and finite finitistic extension degree.

\begin{cor} \label{cor:fed.gor} Assume that $A$ is a commutative Noetherian ring.  If $\id_A(A)$ is finite, then $\fed(A) =  \id(A)$ or $\fed(A)=\infty$. \end{cor}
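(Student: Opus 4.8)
The plan is to deduce the statement from Theorem~\ref{thm:equivalent} together with the localization formula of Remark~\ref{rmk:localization}. By condition~(5) of Theorem~\ref{thm:equivalent} we already have $\fed(A)\le\id(A)$ whenever $\fed(A)$ is finite, so if $\fed(A)=\infty$ there is nothing to prove, and otherwise it remains only to establish the reverse inequality $\fed(A)\ge\id(A)$.

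First I would reduce to the local case. Since $\id_A(A)$ is finite, each localization $B=A_{\frak m}$ at a maximal ideal $\frak m$ is a Gorenstein local ring, and $\fed(B)\le\fed(A)$ is finite by Remark~\ref{rmk:localization}; that remark also gives $\fed(A)=\sup_{\frak m}\fed(A_{\frak m})$, while the standard fact that injective dimension may be computed locally (see e.g.\ \cite{BH}) gives $\id(A)=\sup_{\frak m}\id_{A_{\frak m}}(A_{\frak m})$. Hence it suffices to show $\fed(B)=\id_B(B)$ for an arbitrary Gorenstein local ring $B$ with $\fed(B)$ finite.

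So let $B$ be such a ring and put $e=\id_B(B)$, which equals $\dim B$ since a Gorenstein local ring is Cohen--Macaulay with $\id_B(B)=\operatorname{depth}B$. Theorem~\ref{thm:equivalent}(5) gives $\fed(B)\le e$, and for the opposite inequality I would exhibit a single module attaining the bound. As $B$ is Cohen--Macaulay, a system of parameters $x_1,\dots,x_e$ is a $B$-regular sequence, so the Koszul complex on $x_1,\dots,x_e$ is a minimal free resolution of $M=B/(x_1,\dots,x_e)$ of length $e$; hence $\pd_B(M)=e$ (alternatively, $M$ has depth zero, so this follows from the Auslander--Buchsbaum formula, cf.\ \cite[1.3.3]{BH}). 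Since $\pd_B(M)$ is finite, Lemma~\ref{lem:inequality}(2) yields $\ed_B(M)=\pd_B(M)=e$, whence $\fed(B)\ge e$ and therefore $\fed(B)=e=\id_B(B)$. Combining this with the reduction step gives $\fed(A)=\sup_{\frak m}\id_{A_{\frak m}}(A_{\frak m})=\id(A)$.

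I do not expect a serious obstacle here: the real content is carried by Theorem~\ref{thm:equivalent} (the bound $\fed\le\id$), Remark~\ref{rmk:localization} (passage to localizations), and Lemma~\ref{lem:inequality}(2) (the identity $\ed=\pd$ for finite projective dimension modules over a local ring). The only points requiring a little care are the two routine commutative-algebra inputs used above---that injective dimension localizes, and that a Gorenstein local ring $B$ admits a module of finite projective dimension realizing $\dim B$, namely $B$ modulo a regular system of parameters---both of which are entirely standard.
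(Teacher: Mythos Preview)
Your argument is correct and follows essentially the same route as the paper: reduce to the local case via Remark~\ref{rmk:localization} and the local formula for $\id(A)$, then in the local Gorenstein ring exhibit $B/(\x)$ for a system of parameters $\x$ as a module with $\ed=\pd=\id(B)$ using Lemma~\ref{lem:inequality}(2), and invoke Theorem~\ref{thm:equivalent}(5) for the opposite inequality. The only difference is cosmetic---you front-load the reduction to the local case, whereas the paper treats the local case first and then remarks on the reduction.
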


\begin{proof}  We first assume that $A$ is a local ring.  Since $\id(A)$ is finite, there exists a system of parameters $\x$ for $A$ and $\pd(A/(\x)) = \id(A)$ so that $\ed(A/(\x)) = \id(A)$ by Lemma \ref{lem:inequality}.  This give $\fed(A) \geq \id(A)$ and when $\fed(A)$ is finite Theorem \ref{thm:equivalent} provides the opposite inequality. 

If $A$ not a local ring one may use the equality  \begin{align*} \id_A(A) = \sup\{\id_{A_{\frak{m}}} \ | \ \frak{m} \ {\rm is \ a \ maximal \ ideal \ of \ } A\} \end{align*} and Remark \ref{rmk:localization} to again reduce to the local case.
\end{proof}

\begin{cor}\label{cor:fed.gor2} Assume that $A$ is an Artin algebra.  If $\id_A(A)$ is finite, then $\fed(A) =  \id(A)$ or $\fed(A)=\infty$. \end{cor}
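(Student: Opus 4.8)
The plan is to run the same two-sided argument as in the commutative case, Corollary~\ref{cor:fed.gor}: the bound $\fed(A)\le\id_A(A)$ comes for free from Theorem~\ref{thm:equivalent} once $\fed(A)$ is finite, and for the reverse inequality one exhibits a single module of finite projective dimension whose projective dimension equals $\id_A(A)$. The one point that needs care is that $\fed$ is defined via \emph{left} modules, while the module naturally realizing $\id_A(A)$ sits on the opposite side, so the left--right symmetry $\fed(A)=\fed(A^{\textsf{o}})$ recorded in Remark~\ref{rmk:op-ing} must be invoked.

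If $\fed(A)=\infty$ there is nothing to prove, so suppose $\fed(A)$ is finite; then, since $\id_A(A)<\infty$, Theorem~\ref{thm:equivalent} gives $\fed(A)\le\id_A(A)$, and it remains to prove $\fed(A)\ge\id_A(A)$, for which only the hypothesis $\id_A(A)<\infty$ will be used. Apply the duality $D\colon\Amod\to\modA$ to the regular module $_AA$: a minimal injective coresolution of $_AA$ has length $\id_A(A)$, and $D$ sends it to a minimal projective resolution of the right $A$-module $D(_AA)$, so $\pd_{A^{\textsf{o}}}(D(_AA))=\id_A(A)<\infty$. (This is the opposite-side version of the identity $\id_{A^{\textsf{o}}}(A)=\pd_A(D(_{A^{\textsf{o}}}A))$, Lemma 6.9 of~\cite{AuR}, used in the proof of Proposition~\ref{prop:symmetry}.) Since $D(_AA)$ has finite projective dimension, Lemma~\ref{lem:inequality}(1), applied over the Artin algebra $A^{\textsf{o}}$, gives $\ed_{A^{\textsf{o}}}(D(_AA)\oplus A)=\pd_{A^{\textsf{o}}}(D(_AA))=\id_A(A)$, a finite number; hence $\fed(A^{\textsf{o}})\ge\id_A(A)$, and Remark~\ref{rmk:op-ing} yields $\fed(A)=\fed(A^{\textsf{o}})\ge\id_A(A)$. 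Together with the previous inequality this gives $\fed(A)=\id_A(A)$, as desired.

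The main (and essentially only) obstacle is the bookkeeping indicated above — making sure that the module pinned to projective dimension $\id_A(A)$ lives on the correct side — which is exactly the role played by Remark~\ref{rmk:op-ing}. Alternatively, one can avoid passing to $A^{\textsf{o}}$ by first invoking Corollary~\ref{cor:GARC} to see that $\fed(A)<\infty$ forces $A$ to satisfy (GARC), and then Proposition~\ref{prop:symmetry} to conclude that $A$ is Gorenstein with $\id_A(A)=\id_{A^{\textsf{o}}}(A)$; the module $D(_{A^{\textsf{o}}}A)$ is then a \emph{left} $A$-module of projective dimension $\id_A(A)$, and Lemma~\ref{lem:inequality}(1) applies directly over $A$. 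Either route uses only results already established.
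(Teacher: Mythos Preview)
Your proof is correct, and in fact your ``alternative'' route at the end is exactly the paper's argument: assume $\fed(A)<\infty$, deduce (GARC) via Corollary~\ref{cor:fed.garc}, apply Proposition~\ref{prop:symmetry} (with $\id_A(A)<\infty$) to obtain $\ed(D({_{A^{\textsf{o}}}A})\oplus A)=\id_A(A)$ and hence $\fed(A)\ge\id_A(A)$, and close with Theorem~\ref{thm:equivalent}(5).

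Your primary route is a genuine variant. Rather than passing through (GARC) and Proposition~\ref{prop:symmetry}, you observe directly that $\pd_{A^{\textsf{o}}}(D({_AA}))=\id_A(A)<\infty$, apply Lemma~\ref{lem:inequality}(1) over $A^{\textsf{o}}$ to get $\fed(A^{\textsf{o}})\ge\id_A(A)$, and then invoke the left--right symmetry $\fed(A)=\fed(A^{\textsf{o}})$ from Remark~\ref{rmk:op-ing}. This has the pleasant feature that the lower bound $\fed(A)\ge\id_A(A)$ is established using only $\id_A(A)<\infty$, without ever touching (GARC) or the Gorenstein Symmetry Question; the finiteness of $\fed(A)$ is used only for the upper bound via Theorem~\ref{thm:equivalent}. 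The paper's route, by contrast, needs $\fed(A)<\infty$ for \emph{both} directions, since without (GARC) one does not yet know $\id_{A^{\textsf{o}}}(A)<\infty$ and hence cannot conclude that $D({_{A^{\textsf{o}}}A})$ has finite projective dimension as a left module. Your argument is thus marginally more economical, at the cost of relying on Remark~\ref{rmk:op-ing} instead of Proposition~\ref{prop:symmetry}.
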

\begin{proof} We assume that $A$ is an Artin algebra and that both $\id_A(A)$ and $\fed(A)$ are finite.  Then Corollary \ref{cor:fed.garc} gives that $A$ satisfies (GARC) so Proposition \ref{prop:symmetry} provides the equality $ \ed(D({_{A^{\textsf{o}}}A}) \oplus A) = \id_A(A)$.  This gives $\fed(A) \geq \id(A)$ and then Theorem \ref{thm:equivalent} provides the opposite inequality.  \end{proof}

The authors know of no commutative Gorenstein rings which have infinite finitistic extension degree.     We now describe an example, due to R.\ Schulz, of a non-commutative self-injective ring Artin algebra $A$ with $\fed(A) = \infty$.

\Ex \label{ex:fed.fails}Let $k$ be a field and $0\neq q \in k$ have infinite multiplicative order.  Set $ A = k\left<x,y\right> / (x^2,y^2,xy-qyx) $.  In \cite{Schulz} Schulz has shown that the $A$-module $M = A/(x+y)$ has $\ed(M) =1$.  Thus, $\fed(A) \geq 1$, and since $A$ is self-injective (see 3.1 in \cite{bergh}), Corollary \ref{cor:fed.gor2} gives $\fed(A) = \infty$.  This also shows that $A$ does not satisfy the Generalized Auslander-Reiten Condition.  \\

\begin{rmk} \label{rmk:arc.garc} The conditions (ARC) and (GARC) are not equivalent for Artin algebras.  We have seen that the ring $A$ in Example \ref{ex:fed.fails}  does not satisfy (GARC).  Since this is a local self-injective Artin algebra with maximal ideal $\frak{m}$ and $\frak{m}^3 =0$ it follows from Theorem 3.4 in \cite{Hoshino} that the Auslander-Reiten Condition holds for $A$. It remains unknown to the author if these conditions are equivalent for commutative (Gorenstein) rings, see also Remark 2.4 of \cite{AC}.  \end{rmk}
 
\begin{rmk} \label{rmk:fed.ee} We have observed that all (UAC) rings have finite finitistic extension degree but these conditions are not equivalent, cf.\ Remark \ref{rmk:AC.FED}.  We have shown that and many homological properties that hold for a ring $A$ satisfying (UAC) hold under the weaker assumtion that the $\fed(A) < \infty$.  However, not all properties of (UAC) rings follow from this weaker hypothesis.  Here we include an example of one such property.  

In \cite{HJ} it is shown that all commutative Gorenstein rings satisfying (AC) exhibit a symmetry property in the vanishing of Ext.  This symmetry property is called (ee) in \cite{ee}, where an example of a commutative, self-injective ring $A$ which does not satisfy property (ee) is given.  A slight modification of the proof of Theorem 3.3 in \cite{self-test} will show that the finitistic extension degree of this ring is zero (complete details are given in Theorem 4.21 of \cite{my.thesis}).  That is, $\fed(A) < \infty$ but $A$ does not satisfy the property (ee).  \end{rmk}
 
\end{section}

\begin{section}{Change of rings: The commutative case}

Here we examine how the finitistic extension degree behaves under adjoining variables, quotienting by a regular sequence and completion for commutative rings.  In this section we will always assume that the ring $A$ is commutative.  Similar results for rings satisfying (the Uniform) Auslander Condition have appeared in \cite{AC2}, \cite{HJ} and \cite{Ext-index}. 
 
\begin{prop} \label{regseq} Assume that $A$ is commutative Noetherian ring and $\x = a_1,...,a_n$ is an $A$-regular sequence. Then $\fed(A) \geq \fed(A/( \x)) +n$. \end{prop}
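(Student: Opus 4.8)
The plan is to reduce to the case $n=1$ by an obvious induction, so assume $\x = a$ is a single $A$-regular element and set $B = A/(a)$. I want to produce, for each $B$-module $N$ with $\ed_B(N)$ finite, an $A$-module $M$ with $\ed_A(M)$ finite and $\ed_A(M) \geq \ed_B(N) + 1$; taking the supremum over such $N$ then yields $\fed(A) \geq \fed(B) + 1$. The natural candidate is to view $N$ itself as an $A$-module via restriction of scalars along the surjection $A \to B$. The key computational tool is the standard change-of-rings spectral sequence (or, since $a$ is $A$-regular and kills $N$, the long exact sequence coming from $0 \to A \xrightarrow{a} A \to B \to 0$ applied in the first variable), which relates $\Ext_A^i(N,N)$ to the groups $\Ext_B^j(N,N)$. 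Concretely, because $a$ acts as zero on $N$, one gets short exact sequences
\begin{align*}
0 \to \Ext_B^{i-1}(N,N) \to \Ext_A^i(N,N) \to \Ext_B^i(N,N) \to 0
\end{align*}
for each $i$ (this is the Rees-type identity $\Ext_A^*(B \otimes_B N, N) \cong \Ext_B^*(N,N) \otimes_B \Lambda$ with $\Lambda$ an exterior algebra on one degree-one generator, valid since $a$ is $A$-regular and annihilates $N$). From this it is immediate that $\Ext_A^i(N,N) = 0$ for all large $i$ if and only if the same holds for $\Ext_B^i(N,N)$, so $\ed_A(N)$ is finite precisely when $\ed_B(N)$ is; moreover, if $m = \ed_B(N) < \infty$ then $\Ext_B^m(N,N) \neq 0$ forces $\Ext_A^{m+1}(N,N) \neq 0$ via the surjection above with $i = m+1$ (whose left-hand term is $\Ext_B^m(N,N)$... wait, the surjection is onto $\Ext_B^{m+1}(N,N) = 0$, so actually $\Ext_A^{m+1}(N,N) \cong \Ext_B^m(N,N) \neq 0$), giving $\ed_A(N) \geq m+1$.

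Carrying this out in order: first, establish the Rees identity or the short exact sequence above — I would cite the change-of-rings spectral sequence $\Ext_B^p(N, \Ext_A^q(B,N)) \Rightarrow \Ext_A^{p+q}(N,N)$, noting that since $a$ is $A$-regular one has $\Ext_A^q(B,N) = 0$ for $q \neq 1$ and $\Ext_A^1(B,N) \cong N$, so the spectral sequence collapses to the stated short exact sequences. Second, read off that $\ed_A(N)$ and $\ed_B(N)$ are simultaneously finite. Third, when they are finite, deduce $\ed_A(N) \geq \ed_B(N) + 1$ from the degree-$(m+1)$ piece. Fourth, take $N$ ranging over all $B$-modules with finite self-extension degree to conclude $\fed(A) \geq \fed(B) + 1$, and finally iterate $n$ times (using that $A/(a_1,\dots,a_i)$ is again commutative Noetherian and $a_{i+1}$ is regular on it) to obtain $\fed(A) \geq \fed(A/(\x)) + n$.

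The main obstacle is pinning down the change-of-rings relationship precisely and making sure the extra $+1$ in cohomological degree is genuinely forced rather than merely possible — i.e. that one cannot have accidental cancellation making $\Ext_A^{m+1}(N,N)$ vanish. The collapse of the spectral sequence (equivalently, the exactness of the Koszul-type sequence) is exactly what rules this out, since it makes $\Ext_A^{m+1}(N,N)$ an extension of $\Ext_B^{m+1}(N,N) = 0$ by $\Ext_B^m(N,N) \neq 0$, hence nonzero. A secondary point to verify is that $\ed_B(N)$ finite really does imply $\ed_A(N)$ finite (so that the supremum on the right genuinely contributes to the supremum on the left); this again follows from the two-step relationship between the $\Ext$ groups, since only finitely many $\Ext_B^*(N,N)$ are nonzero forces only finitely many $\Ext_A^*(N,N)$ to be nonzero. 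No delicate estimates beyond this are needed, so the proof should be short once the spectral-sequence input is in place.
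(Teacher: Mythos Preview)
Your strategy is exactly the paper's: reduce to a single regular element $a$, view a $B=A/(a)$-module $N$ of finite self-extension degree as an $A$-module, and use a change-of-rings comparison to show $\ed_A(N)=\ed_B(N)+1$. The paper carries this out by directly citing the change-of-rings long exact sequence
\[
\cdots \to \mathrm{Ext}_B^{i+1}(N,N)\to \mathrm{Ext}_A^{i+1}(N,N)\to \mathrm{Ext}_B^{i}(N,N)\to \mathrm{Ext}_B^{i+2}(N,N)\to\cdots
\]
and reading off, at $i=m:=\ed_B(N)$, the isomorphism $\mathrm{Ext}_A^{m+1}(N,N)\cong \mathrm{Ext}_B^{m}(N,N)\neq 0$ together with vanishing above.

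Your justification of the comparison has a small slip: from $0\to A\xrightarrow{a}A\to B\to 0$ one gets $\mathrm{Ext}_A^q(B,N)=0$ for $q\ge 2$, but $\mathrm{Ext}_A^0(B,N)=\mathrm{Hom}_A(B,N)\cong N$ is \emph{not} zero (since $aN=0$), so the Grothendieck spectral sequence has \emph{two} nonzero rows $q=0,1$, not one. Consequently it does not collapse outright to short exact sequences; the $d_2\colon E_2^{p,1}\to E_2^{p+2,0}$ is exactly the connecting map in the long exact sequence above and need not vanish in general. (This is also why your own claimed output---short exact sequences rather than isomorphisms---does not match a one-row collapse.) None of this damages your conclusion: in the range $i\ge m$ all the $\mathrm{Ext}_B$-terms beyond degree $m$ vanish, so either the long exact sequence or the spectral sequence yields $\mathrm{Ext}_A^{m+1}(N,N)\cong \mathrm{Ext}_B^{m}(N,N)$ and $\mathrm{Ext}_A^{j}(N,N)=0$ for $j>m+1$, which is precisely what is needed. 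Replacing your ``collapses to short exact sequences'' sentence with the long exact sequence (or with the observation that $d_2$ vanishes in degrees $\ge m-1$) makes the argument correct and essentially identical to the paper's.
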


\begin{proof}  We show the case $n =1$, i.e. the regular sequence consists of a single element $a$.  A standard induction argument then gives the general case.  We set $\overline{A} = A/(a)$. It is enough to show that for any $\overline{A}$-module $M$ of finite extension degree there is an equality $\ed_A(M) = \ed_{\overline{A}}(M) +1$.   Given an $\overline{A}$-module $M$ there is a change of rings long exact sequence (see 11.65 in \cite{Rotman}): \[ \rightarrow {\rm Ext}^{i+1}_{\overline{A}}(M,M) \rightarrow \Ext^{i+1}(M,M) \rightarrow {\rm Ext}^{i}_{\overline{A}}(M,M) \rightarrow {\rm Ext}^{i+2}_{\overline{A}}(M,M) \rightarrow \] 

We set $m = \ed_{\overline{A}}(M)$.  If $x$ is finite, then the above sequence gives that   $ \Ext^{m+1}(M,M) \cong {\rm Ext}_{\overline{A}}^m(M,M) \neq 0 $ and $\Ext^i(M,M) = 0$ for all $i> m+1$.  This shows $\ed_A(M) = \ed_{\overline{A}}(M)+1$, as needed.  \end{proof}

When $A$ is a commutative local Gorenstein ring, we show that equality holds in the previous proposition.   

\begin{prop} \label{prop:regseqiff} If $A$ is a commutative local Gorenstein ring,  and $\x = a_1,...,a_n$ is an $A$-regular sequence, then $\fed(A) = \fed(A/(\x))+n$.  \end{prop}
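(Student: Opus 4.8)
The plan is to establish the reverse of the inequality already supplied by Proposition \ref{regseq}, i.e.\ $\fed(A)\le\fed(A/(\x))+n$, and to read off the asserted equality from the rigidity of these invariants over Gorenstein rings. Since a quotient of a commutative Gorenstein local ring by a regular element is again Gorenstein local, and $a_2,\dots,a_n$ is a regular sequence on $A/(a_1)$, an induction on $n$ reduces the whole statement to the case $n=1$. So write $a=a_1$ and $\overline A=A/(a)$; then $\overline A$ is Gorenstein local with $\id_{\overline A}(\overline A)=\id_A(A)-1$, and Proposition \ref{regseq} already gives $\fed(A)\ge\fed(\overline A)+1$. If $\fed(A)<\infty$, then Corollary \ref{cor:fed.gor} gives $\fed(A)=\id_A(A)$, so $\fed(\overline A)\le\id_A(A)-1<\infty$ and a second application of Corollary \ref{cor:fed.gor} gives $\fed(\overline A)=\id_{\overline A}(\overline A)=\id_A(A)-1$; hence $\fed(A)=\fed(\overline A)+1$. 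It therefore suffices to treat the case $\fed(A)=\infty$, in which I must prove $\fed(\overline A)=\infty$.

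Assume $\fed(A)=\infty$. Since $\id_A(A)<\infty$, Theorem \ref{thm:equivalent}(3) produces a module $M\in\CM$ which is not free and has $\ed(M)<\infty$; because $M\in\CM$, formula (\ref{eq:pd.ext}) forces $\pd_A(M)=\infty$. As $A$ is Gorenstein, $M\in\CM$ means $M$ is maximal Cohen-Macaulay, so the $A$-regular element $a$ is also $M$-regular (see \cite{BH}), whence $\Tor_i(\overline A,M)=0$ for all $i>0$. Therefore, if $F_\bullet\to M$ is a minimal free resolution over $A$, the complex $\overline F_\bullet:=F_\bullet\otimes_A\overline A$ is a minimal free resolution of $\overline M:=M/aM$ over $\overline A$; in particular $\pd_{\overline A}(\overline M)=\pd_A(M)=\infty$, so $\overline M$ is not free over $\overline A$.

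It remains to compare self-extensions. Applying $\Hom(F_\bullet,-)$ to the short exact sequence $0\to M\xrightarrow{\,a\,}M\to\overline M\to 0$ (exact because $a$ is $M$-regular) yields a short exact sequence of complexes whose long exact cohomology sequence is
\[ \cdots\longrightarrow\Ext^i(M,M)\xrightarrow{\,a\,}\Ext^i(M,M)\longrightarrow\Ext^i(M,\overline M)\longrightarrow\Ext^{i+1}(M,M)\longrightarrow\cdots. \]
As $\ed(M)<\infty$, the two outer terms vanish for $i>\ed(M)$, so $\Ext^i(M,\overline M)=0$ for $i>\ed(M)$. On the other hand the extension--restriction adjunction gives an isomorphism of complexes $\Hom(F_\bullet,\overline M)\cong{\rm Hom}_{\overline A}(\overline F_\bullet,\overline M)$, which identifies $\Ext^i(M,\overline M)$ with ${\rm Ext}^i_{\overline A}(\overline M,\overline M)$; hence $\ed_{\overline A}(\overline M)\le\ed(M)<\infty$. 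So $\overline M$ is a non-free $\overline A$-module with finite extension degree and infinite projective dimension, and by the equivalence of (1) and (2) in Theorem \ref{thm:equivalent} (which applies since $\id_{\overline A}(\overline A)<\infty$) this can only happen if $\fed(\overline A)=\infty$, as required. The step I expect to be most delicate is the resolution bookkeeping in the middle paragraph: it is precisely the maximal Cohen-Macaulay hypothesis that makes $a$ regular on $M$, and hence makes reduction modulo $a$ carry a minimal $A$-free resolution of $M$ to a still-minimal $\overline A$-free resolution of $\overline M$ --- which is what simultaneously pins down $\pd_{\overline A}(\overline M)$ and justifies the Ext identification above.
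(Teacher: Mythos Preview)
Your proof is correct and follows the same overall strategy as the paper: reduce to a single regular element, handle the exact value via Corollary~\ref{cor:fed.gor} once finiteness is known, and for the finiteness implication transfer self-extension information from $M$ over $A$ to $\overline{M}$ over $\overline{A}$ using that $a$ is $M$-regular on a maximal Cohen--Macaulay module. The differences are purely in the bookkeeping of that transfer. The paper argues the implication directly (assuming $\fed(\overline{A})<\infty$, take $M\in\CM$ with $\ed(M)<\infty$ and show $M$ is free), applies $\Ext^*(-,M)$ to $0\to M\to M\to\overline{M}\to 0$, and then invokes Rees' isomorphism $\Ext^{i+1}_A(\overline{M},M)\cong\Ext^i_{\overline{A}}(\overline{M},\overline{M})$ from \cite[3.1.16]{BH}. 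You argue the contrapositive, apply $\Ext^*(M,-)$ to the same short exact sequence, and use instead the base-change identification $\Ext^i_A(M,\overline{M})\cong\Ext^i_{\overline{A}}(\overline{M},\overline{M})$ coming from Tor-vanishing and tensor--Hom adjunction. Both routes are standard and equally short; yours has the small advantage of making the role of the minimal free resolution explicit, which simultaneously pins down $\pd_{\overline{A}}(\overline{M})=\infty$ without a separate appeal to \cite[1.3.5]{BH}.
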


\begin{proof}   If $\fed(A/(\x)) = \infty$, then Proposition \ref{regseq} gives that $\fed(A) = \infty$.  We may thus assume that $\fed(A)$ is finite. We first show that $\fed(A)$ is also finite.
Next, we assume that $\fed(A/(\x))$ is finite. We show that $\fed(A)$ is also finite, and then the desired equality follows from the previous paragraph.  By induction on the length of the sequence, it suffices to show the result when our sequence $\x$ is a single nonzero divisor $a$.  We set $\overline{X} = X/aX$.  Suppose that $M \in \CM$ and $M$ has finite extension degree.  We show that $M$ is free and then $\fed(A)$ is finite by Theorem \ref{thm:equivalent}.  For this, we have a short exact sequence: \begin{align*}  \xymatrix{ 0 \ar[r] & M \ar[r]^a & M \ar[r] & \overline{M} \ar[r] & 0 } \end{align*} and the resulting long exact sequence in $\Ext^*(-,M)$ gives $\Ext^i(\overline{M}, M) = 0$ for all $i > \ed(M)$.  Now 3.1.16 in \cite{BH}  gives $\Ext^{i+1}(\overline{M},M) \cong {\rm Ext}_{\overline{A}}^i(\overline{M},\overline{M}) = 0 $ for $i \geq \ed_A(M)$.  In particular, the extension degree of the maximal Cohen-Macaulay $\overline{A}$-module $\overline{M}$ is finite.  Theorem \ref{thm:equivalent} then gives that $\overline{M}$ is free over $\overline{A}$.  Lemma 1.3.5 in \cite{BH} then gives that $M$ is a free $A$-module, as claimed.

Corollary \ref{cor:fed.gor}  now gives the first and third equalities below:   \[ \fed(A) = \id_A(A) = \id_{A/(\x)}(A/(\x))+n = \fed(A/(\x))+n, \] the second equility is provided by Corollary 3.1.15 of \cite{BH}. \end{proof}

\begin{thm} \label{ringchg}  Assume that $A$ is a commutative local Gorenstein ring with maximal ideal $\frak{m}$ and that $X$ is an indeterminant.  If any of the following rings have finite finitistic extension degree, then they all must: \[A, \  \widehat{A}, \  A[[X]], \  A[X]_{(\frak{m},X)} \] \end{thm}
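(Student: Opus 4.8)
The plan is to split the statement into a ``power series / polynomial'' part, which follows formally from Proposition~\ref{prop:regseqiff}, and a ``completion'' part, for which I would combine a faithfully flat descent argument with an approximation argument. First note that each of $\widehat A$, $A[[X]]$ and $A[X]_{(\mathfrak{m},X)}$ is again a commutative local Gorenstein ring --- for $A[X]_{(\mathfrak{m},X)}$ this uses that $A[X]$ has finite injective dimension over itself when $A$ does, and that this is inherited by localizations --- so that Theorem~\ref{thm:equivalent} (and Corollary~\ref{cor:fed.gor}) applies to each of them. Since $X$ is a nonzerodivisor on both $A[[X]]$ and $A[X]_{(\mathfrak{m},X)}$, with $A[[X]]/(X)\cong A$ and $A[X]_{(\mathfrak{m},X)}/(X)\cong A$ (the latter because $A[X]/(X)=A$ is already local), applying Proposition~\ref{prop:regseqiff} to the regular element $X$ gives
\[
  \fed\bigl(A[[X]]\bigr)\;=\;\fed(A)+1\;=\;\fed\bigl(A[X]_{(\mathfrak{m},X)}\bigr).
\]
Hence these three rings have finite finitistic extension degree exactly when $A$ does, and it remains only to prove that $\fed(A)<\infty$ if and only if $\fed(\widehat A)<\infty$.

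For $\fed(\widehat A)<\infty\Rightarrow\fed(A)<\infty$ I would use faithfully flat descent along $A\to\widehat A$. By the equivalence of conditions $(1)$ and $(3)$ in Theorem~\ref{thm:equivalent} it suffices to show that every $M\in\CM$ with $\ed_A(M)<\infty$ is free. For such an $M$, put $\widehat M=\widehat A\otimes_A M$. Flat base change over the Noetherian ring $A$ yields ${\rm Ext}^i_{\widehat A}(\widehat M,L)\cong{\rm Ext}^i_A(M,L)$ for every $\widehat A$-module $L$, together with ${\rm Ext}^i_A(M,N)\otimes_A\widehat A\cong{\rm Ext}^i_A(M,N\otimes_A\widehat A)$ for finitely generated $N$; taking $L=\widehat A$ and $L=\widehat M$ and using faithful flatness, one obtains that $\widehat M$ lies in the analogous subcategory ${\rm CM}(\widehat A)$ of $\widehat A$-modules and that $\ed_{\widehat A}(\widehat M)=\ed_A(M)<\infty$. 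Theorem~\ref{thm:equivalent}$(3)$, applied to the Gorenstein ring $\widehat A$, then forces $\widehat M$ to be free over $\widehat A$, and faithfully flat descent of freeness for finitely generated modules gives that $M$ is free over $A$.

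The converse, $\fed(A)<\infty\Rightarrow\fed(\widehat A)<\infty$, is the crux, since a finitely generated $\widehat A$-module need not be extended from $A$. Using Theorem~\ref{thm:equivalent}$(3)$ for $\widehat A$, it suffices to show that each $N\in{\rm CM}(\widehat A)$ with $\ed_{\widehat A}(N)<\infty$ is free. Fix a finite presentation $\widehat A^{\,b}\xrightarrow{\,\phi\,}\widehat A^{\,a}\to N\to 0$. Because the image of $A$ is dense in $\widehat A$ for the $\mathfrak{m}$-adic topology, one may approximate $\phi$, modulo an arbitrarily high power of $\mathfrak{m}$, by a matrix $\phi'$ with entries in $A$; by the rigidity of finitely generated modules over a complete Noetherian local ring with respect to their presentation matrices --- a sufficiently close approximation of a presentation matrix has an isomorphic cokernel --- one may arrange $\operatorname{coker}_{\widehat A}(\phi')\cong N$. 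Setting $N'=\operatorname{coker}_A(\phi')$, flatness gives $\widehat A\otimes_A N'\cong N$, and the base change isomorphisms above, now read from $\widehat A$ down to $A$ via faithful flatness, show that $N'\in\CM$ and $\ed_A(N')=\ed_{\widehat A}(N)<\infty$. Since $\fed(A)<\infty$, Theorem~\ref{thm:equivalent}$(3)$ gives that $N'$ is free over $A$, hence $N\cong\widehat A\otimes_A N'$ is free over $\widehat A$; thus $\fed(\widehat A)<\infty$. Combining all these equivalences proves the theorem. I expect the only non-formal ingredient to be the rigidity statement for presentations over complete local rings; the remaining steps are routine bookkeeping with flat base change and the characterizations of finiteness of $\fed$ recorded in Theorem~\ref{thm:equivalent}.
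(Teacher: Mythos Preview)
Your argument is correct, but it diverges from the paper's in two places.

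For $A[X]_{(\mathfrak m,X)}$ you apply Proposition~\ref{prop:regseqiff} directly to the regular element $X$, using $A[X]_{(\mathfrak m,X)}/(X)\cong A$. The paper instead notes that $\widehat{A[X]_{(\mathfrak m,X)}}\cong \widehat A[[X]]$ and then invokes the equivalences already established for completion and power series. Your route is shorter and avoids the extra passage through completion.

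The real difference is in the implication $\fed(A)<\infty\Rightarrow\fed(\widehat A)<\infty$. You approximate a presentation matrix of $N\in{\rm CM}(\widehat A)$ by one with entries in $A$ and invoke a rigidity lemma for presentations over complete local rings to produce an $A$-module $N'$ with $\widehat A\otimes_A N'\cong N$. This is valid, but it imports an external ingredient (the Guralnick/Elkik-type statement you flag). The paper's argument stays entirely within its own toolkit: choose a maximal $A$-regular sequence $\underline a$; since $A$ is Gorenstein local, $A/(\underline a)$ is Artinian, hence complete, so $A/(\underline a)\cong \widehat A/(\underline a)\widehat A$. Proposition~\ref{prop:regseqiff} gives $\fed(A/(\underline a))<\infty$ from $\fed(A)<\infty$, hence $\fed(\widehat A/(\underline a)\widehat A)<\infty$, and a second application of Proposition~\ref{prop:regseqiff} (now upward in $\widehat A$, where $\underline a$ is again regular by flatness) yields $\fed(\widehat A)<\infty$. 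The paper's trick of collapsing to an Artinian quotient where ``ring $=$ completion'' is slicker and self-contained; your approximation argument is more portable to settings where such a reduction is unavailable, at the cost of the rigidity lemma.
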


\begin{proof} Since $\widehat{A}$ is a faithfully flat $A$-module, we have that \[ 0=\Ext^i(M,M) \ {\rm if \ and \ only \ if} \ 0 = \Ext^i(M,M)\otimes_A \widehat{A} \cong {\rm Ext}_{\widehat{A}}^i(\widehat{M},\widehat{M}) \]  Thus $\fed(A)$ is finite when  $\fed(\widehat{A})$ is.  To see the converse, assume that $\fed(A)$ is finite and take a maximal $A$-sequence $\x$ and $A/(\x) \cong \widehat{A}/(\x) \widehat{A}$.  Then completing $\x$ gives rise to a maximal $\widehat{A}$-sequence.  From Proposition \ref{prop:regseqiff} we obtain $\fed(A/(\x)) < \infty$ so $\widehat{A}/(\x) \widehat{A})$ is also finite.  Applying Proposition \ref{prop:regseqiff} again gives $\fed(\widehat{A}) < \infty$.  

Next, observe that $X$ is a non-zerodivisor on $A[[X]]$ and $A[[X]]/X \cong A$, so that $\fed(A)$ and $\fed(A[[X]])$ are finite simultaneously by Proposition \ref{prop:regseqiff}.  Lastly, note that $\widehat{A[X]_{(\frak{m},X)}} \cong \widehat{A}[[X]]$ so that $\fed(A[X]_{(\frak{m},X)})$ is finite if and only if $\fed(\widehat{A}[[X]])$ is by the above.
\end{proof}
 
\begin{rmk} \label{CompleteIntersections} Recall that in \cite{varieties} it is shown that the equality $\pd(M) = \ed(M)$ holds for each $A$-module $M$ when $A$ is a local complete intersection ring.  We close this section with a direct proof of this fact that does not require the use of support varieties.  In view of Corollary \ref{cor:ed=pd}, it suffices to show that these rings have finite finitistic extension degree.

Indeed, if $B$ is a local complete intersection ring, then $\widehat{B} \cong A/(\x)$ for some regular local ring $A$ and an $A$-regular sequence $\x$.  Since $A$ has finite global dimension, it is clear that $\fed(A)< \infty$.  As we have $\widehat{B} \cong A/(\x)$, it now follows from Proposition \ref{regseq} that $\fed(\widehat{B})< \infty$.   Now Theorem \ref{ringchg} gives that $\fed(B)$ is finite because $\fed(\widehat{B})$ is.  

\end{rmk}

\end{section}

\section*{Acknowledgements}

The author would like to express his gratitude to his thesis advisor, Claudia Miller, for her guidance and support.  We also thank Luchezar Avramov, Lars Christensen, Srikanth Iyengar and Liana \c{S}ega for valuable feedback while this work was in progress.

\end{document}